% Also toggle \setboolean{draft}{true}
%\documentclass[11pt,journal,draftcls,onecolumn]{IEEEtran}
\documentclass[10pt,twoside,twocolumn]{IEEEtran}

\usepackage{todonotes}

\usepackage{enumitem}

\usepackage{ifthen}
\usepackage{graphicx}
\graphicspath{{./img/}}

\usepackage{booktabs}
\usepackage{cite} 
\usepackage[cmex10]{amsmath}
\usepackage{amssymb} 
\usepackage{amsfonts}
\usepackage{amsthm}

\usepackage{fixltx2e}

\usepackage{algorithm} 
\usepackage{algorithmic}

\usepackage{subfigure}

\newcommand{\eg}{{\it e.g.}}  
\newcommand{\ie}{{\it i.e.}}

\newcommand{\reals}{{\mathbb R}}

\newcommand{\naturals}{\mathbb{N}}
% symmetric matrices
\newcommand{\expect}{{\mathbb E}}
\newcommand{\prob}{{\mathbb P}}

\newcommand\plusvee{\mathrel{\ooalign{\lower.5ex
\hbox{$\scriptstyle\vee\mkern.5mu$}\cr\hidewidth\raise.450ex
\hbox{$\scriptstyle+$}\cr}}}

\newcommand{\argmin}{\operatornamewithlimits{argmin}}

\newcommand{\minimize}{\operatornamewithlimits{minimize}}

\newcommand {\mtxnorm}[1]{\left|\mathopen{}\left|\mathopen{}\left|
        {#1} \right|\mathclose{}\right|\mathclose{}\right|}

% Switching between draft and twocolumn mode
% ------------------------------------------
\newboolean{draft}
% ******* the following line needs to be there for onecolumn (draft)
% ******* and needs to have a ``%'' in front for twocolumn
%\setboolean{draft}{true} % if it is in draft one-column mode

\newcommand{\isdraft}[2]{\ifthenelse{\boolean{draft}}{#1}{#2}}

\newtheorem{theorem}{Theorem} 
\newtheorem{lemma}[theorem]{Lemma}
\newtheorem{proposition}[theorem]{Proposition}

\newtheorem{assumption}[theorem]{Assumption}

\title{Simple and fast convex relaxation method for cooperative
  localization in sensor networks using range measurements}

\author{Cl\'audia Soares*,~\IEEEmembership{Student Member,~IEEE,}
  Jo\~ao Xavier,~\IEEEmembership{Member,~IEEE,} and Jo\~ao
  Gomes,~\IEEEmembership{Member,~IEEE} \thanks{Copyright (c) 2015 IEEE. Personal use of this material is permitted. However, permission to use this material for any other purposes must be obtained from the IEEE by sending a request to pubs-permissions@ieee.org.

This work was partially
    supported by Funda\c{c}\~{a}o para a Ci\^{e}ncia e a Tecnologia (grant
    SFRH/BD/72521/2010 and projects PTDC/EMS-CRO/2042/2012,
    UID/EEA/50009/2013) and EU FP7 project MORPH (grant agreement
    no. 288704). The authors are with the Institute for Systems and
    Robotics (ISR/IST), Instituto Superior T\'ecnico, Univ
    Lisboa, 1049-001 Lisboa, Portugal (e-mail: csoares@isr.ist.utl.pt;
    jxavier@isr.ist.utl.pt; jpg@isr.ist.utl.pt).}}

\begin{document} 

%\IEEEpubid{0000--0000/00\$00.00 \copyright2002 IEEE}

\maketitle

\begin{abstract}
  We address the sensor network localization problem given noisy range
  measurements between pairs of nodes. We approach the nonconvex
  maximum-likelihood formulation via a known simple convex
  relaxation. We exploit its favorable optimization properties to the
  full to obtain an approach that: is completely distributed, has a
  simple implementation at each node, and capitalizes on an optimal
  gradient method to attain fast convergence.  We offer a parallel but
  also an asynchronous flavor, both with theoretical convergence
  guarantees and iteration complexity analysis. Experimental results
  establish leading performance. Our algorithms top the accuracy of a
  comparable state of the art method by one order of magnitude, using
  one order of magnitude fewer communications.
\end{abstract}

\begin{IEEEkeywords}
  Distributed algorithms, convex relaxations, nonconvex
  optimization, maximum likelihood estimation, distributed iterative sensor
  localization, wireless sensor networks.
\end{IEEEkeywords}

\begin{center} \bfseries EDICS Category: OPT-CVXR NET-CONT NET-DISP
  OPT-DOPT \end{center}
\IEEEpeerreviewmaketitle

\section{Introduction}
\label{sec:introduction}

Sensor networks are becoming ubiquitous. From environmental and
infrastructure monitoring to surveillance, and healthcare
% (\eg, body
%area networks, staff and equipment localization~\cite{lai2011}),
networked extensions of the human senses in contemporary technological
societies are improving our quality of life, our productivity, and our
safety. Applications of sensor networks recurrently need to be aware
of node positions to fulfill their tasks and deliver meaningful
information. Nevertheless, locating the nodes is not trivial: these
small, low cost, low power devices are deployed in large numbers, often
with imprecise prior knowledge of their locations, and are equipped
with minimal processing capabilities. Such limitations call for
localization algorithms which are scalable, fast, and parsimonious in
their communication and computational requirements. %Such algorithms
%pose a fascinating challenge to the signal processing community.

\subsection{Problem statement}
\label{sec:problem-statement}
The sensor network is represented as an undirected graph~$\mathcal{G}
= (\mathcal{V},\mathcal{E})$. In the node set $\mathcal{V} =
\{1,2, \dots, n\}$ we represent the sensors with unknown positions. There
is an edge $i \sim j \in {\mathcal E}$ between sensors $i$ and $j$ if
a noisy range measurement between nodes $i$ and $j$ is available (at
both of them) and nodes $i$ and $j$ can communicate with each other.
Anchors are elements with known positions and are collected in the set
${\mathcal A} = \{ 1, \ldots, m \}$. For each sensor $i \in {\mathcal
  V}$, we let ${\mathcal A}_i \subset {\mathcal A}$ be the subset of
anchors (if any) whose distance to node $i$ is quantified by a noisy
range measurement. The set~$N_{i}$ collects the neighbors of node~$i$.

Let $\reals^p$ be the space of interest ($p=2$ for planar networks,
and $p=3$ otherwise).  We denote by $x_i \in \reals^p$ the position of
sensor $i$, and by $d_{ij}$ the noisy range measurement between
sensors $i$ and $j$, available at both $i$ and
$j$. Following~\cite{ShiHeChenJiang2010}, we assume $d_{ij} =
d_{ji}$\footnote{This entails no loss of generality: it is readily
  seen that, if $d_{ij} \neq d_{ji}$, then it suffices to replace
  $d_{ij} \leftarrow (d_{ij} + d_{ji})/2$ and $d_{ji} \leftarrow
  (d_{ij} + d_{ji})/2$ in the forthcoming optimization
  problem~\eqref{eq:snlOptimizationProblem}.}. Anchor positions are
denoted by $a_{k} \in \reals^{p}$. We let $r_{ik}$ denote the noisy
range measurement between sensor $i$ and anchor $k$, available at
sensor $i$.

The distributed network localization problem addressed in this work
consists in estimating the sensors' positions $x = \{ x_i\, : \, i \in
\mathcal{V} \}$, from the available measurements $\{ d_{ij} \, : \, i
\sim j \} \cup \{ r_{ik} \, : \, i \in {\mathcal V}, k \in {\mathcal
  A}_i \}$ and known anchor positions~$a_{k} \in
  \mathcal{A}$, through 
collaborative message passing between neighboring sensors in the
communication graph~${\mathcal G}$.

Under the assumption of zero-mean, independent and
identically-distributed, additive Gaussian measurement noise, the
maximum likelihood estimator for the sensor positions is the solution
of the optimization problem
\begin{equation}
  \label{eq:snlOptimizationProblem} 
  \minimize_{x} f(x),
\end{equation} 
where
\begin{equation*} 
  f(x) = \sum _{i \sim j} \frac{1}{2}(\|x_{i} - x_{j}\| - d_{ij})^2 + \sum_{i}
  \sum_{k \in \mathcal{A}_{i}} \frac{1}{2}(\|x_{i}-a_{k}\| - r_{ik})^2.
\end{equation*} 
Problem~\eqref{eq:snlOptimizationProblem} is nonconvex and difficult
to solve~\cite{SimonettoLeus2014},  nevertheless, it is
  guaranteed to have a global minimum, since function~$f$ is
  continuous and coercive (because, as shown in
  Lemma~\ref{lem:basic-properties} ahead, it is lower bounded by a
  coercive function~$\hat f$).

% Let xn be a sequence such that f(xn)→inff.

% Such a sequence exists for any function with finite inff, because for every n we can find xn such that f(xn)<inff+1/n. The next step is to extract a convergent subsequence from xn; this is possible because the sequence xn is bounded (thanks to the coercitivity of f). Say, xnk→x. The final step is to use the LSC property:

% f(x)≤limf(xnk)=inff
% hence f(x)=inff.

\subsection{Contributions}
\label{sec:contributions} 

% In the present work we set forth a simple, distributed, fully
% asynchronous algorithm for sensor network localization, in which all
% nodes work in parallel and collaborate only with single-hop
% neighbors. It is obtained by casting the localization problem in a
% principled maximum likelihood framework relaxed as a tight convex
% function to tackle the resulting nonconvex optimization problem.

% Algorithm analysis and computer simulations show that our approach
% yields better sensor position accuracies than the state-of-art methods
% in~\cite{SimonettoLeus2014}
% and~\cite{GholamiTetruashviliStromCensor2013}.

We set forth %a formal interpretation for the proposed
a convex underestimator of the maximum likelihood cost for the sensor
network localization problem~\eqref{eq:snlOptimizationProblem} based
on the convex envelopes of its
parcels.% and we use this reasoning to obtain a
%simple bound for the optimality gap of an estimate.

We present an optimal synchronous and parallel algorithm to minimize
this convex underestimator --- with proven convergence guarantees. We
also propose an asynchronous variant of this algorithm and prove it
converges almost surely.  Furthermore, we analyze its iteration
complexity.

Moreover, we assert the superior performance of our algorithms by
computer simulations; we compared several aspects of our method
with~\cite{SimonettoLeus2014},~\cite{OguzGomesXavierOliveira2011},
and~~\cite{GholamiTetruashviliStromCensor2013}, and our approach
always yields better performance metrics. When compared with the
method in~\cite{SimonettoLeus2014}, which operates under the same
conditions, our method outperforms it by one order of magnitude in
accuracy and in communication volume.

\subsection{Related work}
\label{sec:literature-review}

% Some approaches to the sensor network localization problem, like the
% ones explored
% in~\cite{KellerGur2011,CalafioreCarloneWei2010,SrirangarajanTewfikLuo2008}
% work on a squared distances discrepancy function
% \begin{equation}
%   \label{eq:squared-dist}
%   f_{s}(x) = \sum_{i \sim j} \left(\|x_{i}-x_{j}\|^{2} -
%     d_{ij}^{2}\right)^{2} + \sum_{i} \sum_{k \in
%     \mathcal{A}_{i}}\left(\|x_{i}-a_{k}\|^{2} - r_{ik}^{2}\right)^{2},
% \end{equation}
% which is mathematically more tractable but amplifies measurement
% errors and outliers and does not benefit from the maximum likelihood
% estimator limiting properties. \todo{so what??}

% Currently available iterative techniques don't claim convergence to
% the global optimum. As a matter of fact, multiple solutions might
% exist due to ambiguities in the network topology itself even with
% noiseless distances. For noiseless \emph{and} unambiguous topologies, also
% called \emph{localizable} networks, the addition of measurement noise
% may create further ambiguities. However, recent studies
% like~\cite{AndersonShamesMaoFidan2010} provide theoretical guarantees
% to confidently consider networks which are approximately localizable,
% \ie, whose localizability resists to a reasonable amount of
% measurement noise.

With the advent of large-scale networks, the computational paradigm of
information processing algorithms --- centralized \emph{versus}
distributed --- becomes increasingly critical. A centralized method
can be less suited for a network with meager
communication and computation resources, while a distributed algorithm
might not be adequate if the network is supposed to deliver in one
place the global result of its computations. Further, none of the
available techniques to address
Problem~\eqref{eq:snlOptimizationProblem} claims convergence to the
global optimum--- due to the nonconvexity, but also due to
ambiguities in the network topology which create more than one distant
global optimum~\cite{DestinoAbreu2011}. 
%Thus, the work developed so
%far can be categorized as centralized \emph{versus} distributed.

\subsubsection*{Centralized paradigm}
\label{sec:centralized-methods}
The centralized approach to the problem of sensor network localization
summoned up a wide body of research. It involves a central processing
unit to which all sensor nodes communicate their collected
measurements. Centralized architectures are prone to data traffic
bottlenecks close to the central node. Resilience to
failure, security and privacy issues are, also, not naturally
accounted for by the centralized architecture. Moreover, as the number
of nodes in the network grows, the problem to be solved at the central
node becomes increasingly complex, thus raising scalability
concerns.

Focusing on recent work, several different approaches are available,
such as the work in~\cite{KellerGur2011}, where sensor network
localization is formulated as a regression problem over adaptive
bases. The method has an initialization step using eigendecomposition
of an affinity matrix; its entries are functions of squared distance
measurements between sensors. 
%This matrix is positive semi-definite
%with noise free measurements, but in the presence of noise this
%property in no longer guaranteed. 
The refinement is done by conjugate
gradient descent over a discrepancy function of squared distances ---
which is mathematically more tractable but amplifies measurement
errors and outliers and does not benefit from the limiting properties
of maximum likelihood estimators. This approach is closely related to
multidimensional scaling, where the sensor network localization
problem is posed as a least-squares problem, as
in~\cite{ShangRumiZhangFromherz2004}. Multidimensional scaling is
unreliable in large-scale networks due to their sparse
connectivity. Also relying on the well-tested weighted least squares
approach, the work in~\cite{DestinoAbreu2011} performs successive
minimizations of a weighted least squares cost function convolved with
a Gaussian kernel of decreasing variance.

Another successfully pursued approach is to perform semi-definite or
weaker second-order cone relaxations of the original nonconvex
problem~\eqref{eq:snlOptimizationProblem}~\cite{OguzGomesXavierOliveira2011,BiswasLiangTohYeWang2006}.
These approaches do not scale well, since the centralized SDP or SOCP
problem gets very large even for a small number of nodes.
In~\cite{OguzGomesXavierOliveira2011} and~\cite{KorkmazVeen2009} the
majorization-minimization framework was used with quadratic cost
functions to derive centralized approaches to the sensor network
localization problem.

\subsubsection*{Distributed paradigm}
\label{sec:related-work-d}

In the present work, the expression \emph{distributed method}
denotes an algorithm requiring no central or fusion node where all
nodes perform the same types of computations. Distributed approaches
for cooperative localization have been less frequent than
centralized ones, despite the more suited nature of this computational
paradigm to sensor networks, when the target application does not
require that the estimate of all sensor positions be available in one
place.

We consider two main approaches to the distributed sensor network
localization problem: 1) one where the nonconvex
Problem~\eqref{eq:snlOptimizationProblem} (or some other nonconvex
discrepancy minimization) is attacked directly, and hence the quality
of the solution is highly dependent on the quality of the algorithm's
initialization; 2) and another, where the original nonconvex sensor
network localization problem is relaxed to a convex problem, whose
tightness will determine how close the solution of the convex problem
will approximate the global solution of the original problem, not
needing any particular initialization.

\paragraph{Initialization dependent}
\label{sec:init-depend}
In reference~\cite{CostaPatwariHero2006} the authors develop a
distributed implementation of multidimensional scaling for solution
refinement. These authors base their method on the majorization-minimization
framework, but they do not provide a formal proof of convergence for
the Jacobi-like iteration.  The work in~\cite{CalafioreCarloneWei2010}
puts forward two distributed methods optimizing the discrepancy of
squared distances: a gradient algorithm with Barzilai-Borwein step
sizes calculated in a first consensus phase, followed by a gradient
computation phase, and a Gauss-Newton algorithm also with a consensus
phase and a gradient computation phase. Both are refinement methods
that need good initializations to converge to the global optimum.

\paragraph{Initialization independent}
\label{sec:init-indep}
The work in~\cite{SrirangarajanTewfikLuo2008} proposes a parallel
distributed algorithm. However, the sensor network localization
problem adopts the previously discussed squared distances discrepancy
function. Also, each sensor must solve a second order cone program at
each algorithm iteration, which can be a demanding task for the simple
hardware used in sensor networks' motes. Furthermore, the formal
convergence properties of the algorithm are not established.
% it is a Jacobi iteration which is not guaranteed to converge.
The work in~\cite{ChanSo2009} also considers network localization
outside a maximum likelihood framework. The approach proposed
in~\cite{ChanSo2009} is not parallel, operating sequentially through
layers of nodes: neighbors of anchors estimate their positions and
become anchors themselves, making it possible in turn for their
neighbors to estimate their positions, and so on. Position estimation
is based on planar geometry-based heuristics.
In~\cite{KhanKarMoura2010}, the authors propose an algorithm with
assured asymptotic convergence, but the solution is computationally
complex since a triangulation set must be calculated, and matrix
operations are pervasive. Furthermore, in order to attain good
accuracy, a large number of range measurement rounds must be acquired,
one per iteration of the algorithm, thus increasing energy
expenditure.  On the other hand, the algorithm presented
in~\cite{ShiHeChenJiang2010} and based on the non-linear Gauss Seidel
framework, has a pleasingly simple implementation, combined with the
convergence guarantees inherited from the framework. Notwithstanding,
this algorithm is sequential, \ie, nodes perform their calculations in
turn, not in a parallel fashion. This entails the existence of a
network-wide coordination procedure to precompute the processing
schedule upon startup, or whenever a node joins or leaves the network.
The sequential nature of the work in~\cite{ShiHeChenJiang2010} was
superseded by the work in~\cite{SimonettoLeus2014} which puts forward
a parallel method based on two consecutive relaxations of the maximum
likelihood estimator in~\eqref{eq:snlOptimizationProblem}. The first
relaxation is a semi-definite program with a rank relaxation, while
the second is an edge based relaxation, best suited for the
Alternating Direction Method of Multipliers (ADMM). 
%This method has
%proven convergence properties in the synchronous case. In an
%asynchronous setup convergence in probability is attained only in
%infinite time; 
The main drawback is the amount of communications required to manage
the ADMM variable local copies, and by the prohibitive complexity of
the problem at each node. In fact, each one of the simple sensing
units must solve a semidefinite program at each ADMM iteration and
after the update copies of the edge variables must be exchanged with
each neighbor.  A simpler approach was devised
in~\cite{GholamiTetruashviliStromCensor2013} by extending the source
localization Projection Onto Convex Sets algorithm
in~\cite{BlattHero2006} to the problem of sensor network
localization. The proposed method is sequential, activating nodes one
at a time according to a predefined cyclic schedule; thus, it does not
take advantage of the parallel nature of the network and imposes a
stringent timetable for individual node activity.
% Further,
%convergence guarantees are derived for infinite time. The convex
%relaxed unconstrained problem is the same as in the present work,
%although they were independently
%derived. In~\cite{GholamiTetruashviliStromCensor2013} the relaxed cost
%function is not formally motivated.

\section{Convex relaxation}
\label{sec:convex-relaxation}
Problem~\eqref{eq:snlOptimizationProblem} can be written as
\begin{equation}
  \label{eq:non-cvx-prob-dist}
  \minimize_{x} \sum_{i \sim
    j}\frac{1}{2}\mathrm{d}_{\mathrm{S}_{ij}}^{2}(x_{i}-x_{j}) + \sum_{i}\sum_{k
    \in \mathcal{A}_{i}} \frac{1}{2}\mathrm{d}^{2}_{\mathrm{S_{a}}_{ik}}(x_{i}),
\end{equation}
where $\mathrm{d}^{2}_{C}(x)$ represents the squared Euclidean distance
of point $x$ to the set $C$, \ie,
$
  \mathrm{d}^{2}_{C}(x) = \inf_{y \in C} \|x -y\|^{2},
$
and the sets $\mathrm{S}_{ij}$ and $\mathrm{S_{a}}_{ik}$ are defined
as the spheres generated by the noisy measurements $d_{ij}$ and $r_{ik}$
$$
  \mathrm{S}_{ij} = \left\{z : \|z\| =
    d_{ij}\right\}, \quad
  \mathrm{S_{a}}_{ik}  =  \left\{z  : \|z - a_{k}\| =
    r_{ik}\right\}.
$$
nonconvexity of~\eqref{eq:non-cvx-prob-dist} follows from the nonconvexity of the building block % stems from the terms concerning inter-sensor edges $i \sim j$
%in~\eqref{eq:non-cvx-prob-dist} we have
\begin{equation}
  \label{eq:sphere-sq-dist}
  \frac{1}{2}\mathrm{d}_{\mathrm{S}_{ij}}^{2}(z) =
  \frac{1}{2}\inf_{\|y\| = d_{ij}} \|z-y\|^{2}.
\end{equation} 
%is a nonconvex function of $z$, due to the equality constraint $\|y\| = d_{ij}$. 
%The same applies to $\frac{1}{2}\mathrm{d}^{2}_{\mathrm{S_{a}}_{ik}}$.
A simple convexification consists in replacing it by 
\begin{equation}
  \label{eq:ball-sq-dist}
    %\phi_{\mathrm{B}_{ij}}(z):=
    \frac{1}{2}\mathrm{d}_{\mathrm{B}_{ij}}^{2}(z) = \frac{1}{2}
    \inf_{\|y\| \leq d_{ij}} \|z-y\|^{2}
\end{equation}
where $ \mathrm{B}_{ij} = \left\{z \in \reals^{p} : \|z\| \leq
  d_{ij}\right\}, $ is the convex hull of $\mathrm{S}_{ij}$. Actually,
\eqref{eq:ball-sq-dist} is the convex envelope\footnote{The convex
  envelope (or convex hull) of a function~$\gamma$ is its best
  possible convex underestimator, \ie, $ \text{conv } \gamma(x) =
  \sup\left \{ \eta(x) \; : \; \eta \leq \gamma, \; \eta \text{ is
      convex} \right \} $, and is hard to determine in general.}
of~\eqref{eq:sphere-sq-dist}.  This fact is illustrated in
Figure~\ref{fig:tightenvelope} with a one-dimensional example; a formal proof for the generic case is given in section~\ref{sec-cvxenv}.
%Function $\phi$ is clearly the best convex underestimator of the
%squared distance to the sphere in~\eqref{eq:sphere-sq-dist}: it is
%equal to~\eqref{eq:sphere-sq-dist} in the convex portion relative to
%$\|z\| \geq d_{ij}$ and linear in the nonconvex interval where $\|z\|
%< d_{ij}$.
\begin{figure}[h]
  \centering
  \includegraphics[width=\columnwidth]{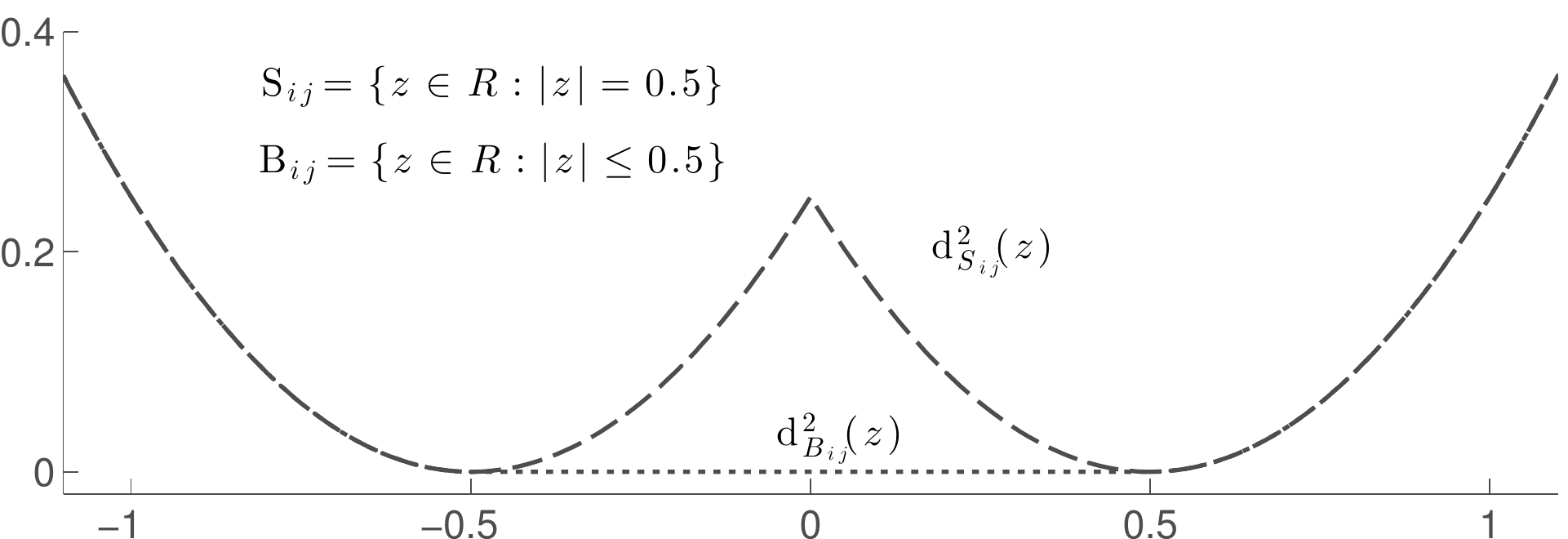}
  \caption{Illustration of the convex envelope for intersensor terms of the
    nonconvex cost function~\eqref{eq:non-cvx-prob-dist}. The squared
    distance to the ball $\mathrm{B}_{ij}$ (dotted line) is the convex
    hull of the squared distance to the sphere $\mathrm{S}_{ij}$
    (dashed line). In this one dimensional example the value of the range
    measurement is $d_{ij} = 0.5$ }
  \label{fig:tightenvelope}
\end{figure}
The terms of~\eqref{eq:non-cvx-prob-dist} associated
with anchor measurements are similarly relaxed as
\begin{equation}
  \label{eq:ball-sq-dist-anchors}
  \mathrm{d}_{\mathrm{B_{a}}_{ik}}^{2}(z) = \inf_{\|y - a_{k}\| \leq
    r_{ik}} \|z-y\|^{2},
\end{equation}
where the set $\mathrm{B_{a}}_{ik}$ is the convex hull of
$\mathrm{S_{a}}_{ik}$:
$
    \mathrm{B_{a}}_{ik} = \left\{z \in \reals^{p} : \|z - a_{k}\| \leq
    r_{ik}\right\}.
$
Replacing the nonconvex parcels in~\eqref{eq:non-cvx-prob-dist} by the
sums of terms~\eqref{eq:ball-sq-dist}
and~\eqref{eq:ball-sq-dist-anchors} we obtain the convex problem
\begin{equation}
  \label{eq:cvx-prob-1}
    \minimize_{x} \hat{f}(x) = \sum_{i \sim
    j}\frac{1}{2}\mathrm{d}_{\mathrm{B}_{ij}}^{2}(x_{i}-x_{j}) + \sum_{i}\sum_{k
    \in \mathcal{A}_{i}} \frac{1}{2}\mathrm{d}^{2}_{\mathrm{B_{a}}_{ik}}(x_{i}).
\end{equation}

The function in Problem~\eqref{eq:cvx-prob-1} is an underestimator
of~\eqref{eq:non-cvx-prob-dist} but it is not the convex envelope of
the original function.  We argue that in our application of sensor
network localization it is generally a very good approximation whose
sub-optimality can be quantified, as discussed in
Section~\ref{sec:qual-conv-probl}.  The cost
function~\eqref{eq:cvx-prob-1} also appears
in~\cite{GholamiTetruashviliStromCensor2013} albeit via a distinct
reasoning; our convexification mechanism seems more intuitive. But the
striking difference with respect
to~\cite{GholamiTetruashviliStromCensor2013} is
how~\eqref{eq:cvx-prob-1} is exploited to generate distributed
solution methods. Whereas ~\cite{GholamiTetruashviliStromCensor2013}
lays out a sequential block-coordinate approach, we show
that~\eqref{eq:cvx-prob-1} is amenable to distributed solutions either
via the fast Nesterov's gradient method (for synchronous
implementations) or exact/inexact randomized block-coordinate methods
(for asynchronous implementations).

\section{Distributed sensor network localization}
\label{sec:distr-sens-netw}

 We propose two distributed algorithms: a synchronous
  one, where nodes work in parallel, and an asynchronous, gossip-like
  algorithm, where each node starts its processing step according to
  some probability distribution. Both algorithms require to compute
  the gradient of the cost function and its Lipschitz constant. In
  order to achieve this it is convenient to rewrite
Problem~\eqref{eq:cvx-prob-1} as
\begin{equation}
  \label{eq:cvx-prob-2}
    \minimize_{x} \frac{1}{2}\mathrm{d}_{\mathrm{B}}^{2}(Ax) + \sum_{i}\sum_{k
    \in \mathcal{A}_{i}} \frac{1}{2}\mathrm{d}^{2}_{\mathrm{B_{a}}_{ik}}(x_{i}), 
\end{equation}
where $A=C \otimes I_{p}$, $C$ is the arc-node incidence matrix of
$\mathcal{G}$,~$I_{p}$ is the identity matrix of size~$p$, and
$\mathrm{B}$ is the Cartesian product of the balls~$\mathrm{B}_{ij}$
corresponding to all the edges in~$\mathcal{E}$. 
We
denote the two parcels in~\eqref{eq:cvx-prob-2} as
\begin{equation*}
  \label{eq:g-and-h}
  g(x) = \frac{1}{2}\mathrm{d_{B}}^{2}(Ax), \qquad h(x) = \sum_{i} h_{i}(x_{i}),
\end{equation*}
where $h_{i}(x_{i}) = \sum_{k \in \mathcal{A}_{i}} \frac{1}{2}
\mathrm{d}^{2}_{\mathrm{B_{a}}_{ik}}(x_{i})$. 
Problems~\eqref{eq:cvx-prob-1} and~\eqref{eq:cvx-prob-2} are equivalent since $A x$ is the vector  $\left( x_i - x_j\,:\,i \sim j \right)$ and
function~$g(x)$ in~\eqref{eq:cvx-prob-2} can be written as
\begin{align*}
  g(x) & = \frac{1}{2}\mathrm{d_{B}}^{2}(Ax) \\
&= \frac 12 \inf_{y \in \mathrm{B}}\|Ax - y\|^{2} \\
  &= \frac 12 \inf_{\|y_{ij}\| \leq d_{ij}}\sum_{i \sim j} \|x_{i}
  - x_{j} - y_{ij}\|^{2}
\end{align*}
and as all the terms are non-negative and the constraint set is a
Cartesian product, we can exchange~$\inf$ with the summation,
resulting in
\begin{align*}
  g(x) &= \frac 12 \sum_{i \sim j} \inf_{\|y_{ij}\| \leq d_{ij}}
  \|x_{i} - x_{j} - y_{ij}\|^{2}\\
  &= \sum_{i \sim j}  \frac 12 \mathrm{d_{B}}_{ij}^{2}(x_{i}-x_{j}),
\end{align*}
which is the corresponding term in~\eqref{eq:cvx-prob-1}.

\subsection{Gradient and Lipschitz constant of~$\hat{f}$}
\label{sec:grad-lipsch-const}

To simplify notation, let us define the functions:
\begin{equation*}
  \phi_{\mathrm{B}_{ij}}(z) = \frac 12
  \mathrm{d}^{2}_{\mathrm{B}_{ij}}(z), \qquad 
  \phi_{\mathrm{B_{a}}_{ik}}(z) = \frac 12
  \mathrm{d}^{2}_{\mathrm{B_{a}}_{ik}}(z).
\end{equation*}
Now we call on a key result from convex analysis (see~\cite[Prop. X.3.2.2,
Th. X.3.2.3]{UrrutyMarechal1993}): the function
in~\eqref{eq:ball-sq-dist},~$\phi_{\mathrm{B}_{ij}}(z) = \frac12
\mathrm{d}_{\mathrm{B}_{ij}}^{2}(z)$
is convex, differentiable, and its gradient is
\begin{equation}
  \label{eq:sq-dist-grad}
  \nabla \phi_{\mathrm{B}_{ij}}(z) = z - \mathrm{P}_{\mathrm{B}_{ij}}(z),
\end{equation}
where $\mathrm{P}_{\mathrm{B}_{ij}}(z)$ is the orthogonal projection
of point $z$ onto the closed convex set $\mathrm{B}_{ij}$
\begin{equation*}
  \mathrm{P}_{\mathrm{B}_{ij}}(z) = \argmin_{y \in \mathrm{B}_{ij}} \|z-y\|.
\end{equation*}
Further,  function $\phi_{\mathrm{B}_{ij}}$ has a Lipschitz
continuous gradient with constant $L_{\phi} = 1$, \ie,
\begin{equation}
  \label{eq:sq-dist-lipschitz-grad}
  \|\nabla\phi_{\mathrm{B}_{ij}}(x)-\nabla\phi_{\mathrm{B}_{ij}}(y)\|
  \leq \|x-y\|.
\end{equation}
We show~\eqref{eq:sq-dist-lipschitz-grad} in section~\ref{sec-lips}.

Let us define a vector-valued function~$\phi_{\mathrm{B}}$, obtained
by stacking all functions~$\phi_{\mathrm{B}_{ij}}$. Then,~$g(x) =
\phi_{\mathrm{B}}(Ax)$. From this relation, and using
Eq.~\eqref{eq:sq-dist-grad}, we can compute the gradient
of~$g(x)$:
\begin{eqnarray}
\nonumber
  \nabla g(x) &=& A^{\top}\nabla\phi_{\mathrm{B}}(Ax)\\
    \nonumber
    &=& A^{\top} (Ax - \mathrm{P}_{\mathrm{B}}(Ax))\\
    \label{eq:gradg}
    &=& \mathcal{L}x - A^{\top}\mathrm{P}_{\mathrm{B}}(Ax),
\end{eqnarray}
where the second equality follows from~\eqref{eq:sq-dist-grad} and
$\mathcal{L}= A^{\top}A = L \otimes I_{p}$, with $L$ being the
Laplacian matrix of~$\mathcal{G}$. This gradient is Lipschitz
continuous and we can obtain an easily computable Lipschitz
constant~$L_{g}$ as follows
\begin{eqnarray}
  \nonumber
  \|\nabla g(x) - \nabla g(y) \| &=&
  {\|A^{\top} \left(\nabla\phi_{\mathrm{B}}(Ax) -
    \nabla\phi_{\mathrm{B}}(Ay)\right)\|}\\
  \nonumber
  &\leq& \mtxnorm{A} \|Ax - Ay\|\\
  \nonumber
  &\leq& \mtxnorm{A}^{2} \|x-y\|\\
  \nonumber
  &=& \lambda_{\mathrm{max}}(A^{\top}A)\|x-y\|\\
  \nonumber
  &\overset{\tiny{(a)}}{=}& \lambda_{\mathrm{max}}(L)\|x-y\|\\
  \label{eq:lips-g}
  &\leq& \underbrace{2 \delta_{\mathrm{max}}}_{L_{g}} \|x-y\|,
\end{eqnarray}
where~$\mtxnorm{A}$ is the maximum singular value norm; equality $(a)$
is a consequence of Kronecker product properties. In~\eqref{eq:lips-g}
we denote the maximum node degree of $\mathcal{G}$ by
$\delta_{\mathrm{max}}$. A proof of the
bound~$\lambda_{\mathrm{max}}(L) \leq 2\delta_{\mathrm{max}}$ can be
found in~\cite{chung1997spectral}\footnote{{A tighter bound would be
  $\lambda_{\mathrm{max}}(L) \leq \max_{i \sim j} \left \{\delta_{i} +
    \delta_{j} - c(i,j) \right \}$ where~$\delta_{i}$ is
the degree of node~$i$ and~$c(i,j)$ is the number of vertices that are
adjacent to both~$i$ and~$j$~\cite[Th.~4.13]{Bapat2010}, nevertheless~$2
  \delta_{\mathrm{max}}$ is easier to compute in a distributed
  way.}}.

The gradient of $h$ is $ \nabla h(x) = \left( \nabla h_{1}(x_{1}),
  \ldots, \nabla h_{n}(x_{n}) \right), $ where the gradient of each
$h_{i}$ is
\begin{equation}
  \label{eq:gradhi}
  \nabla h_{i}(x_{i}) = \sum_{k \in \mathcal{A}_{i}} \nabla
  \phi_{\mathrm{B_{a}}_{ik}}(x_{i}).
\end{equation}
The gradient of $h$ is also Lipschitz
continuous. The constants~$L_{h_{i}}$ for $\nabla h_{i}$ are
\begin{eqnarray}
  \nonumber
  \| \nabla h_{i}(x_{i}) - \nabla h_{i}(y_{i}) \| & \leq & \sum_{k \in
  \mathcal{A}_{i}} \| \nabla \phi_{\mathrm{B_{a}}_{ik}}(x_{i}) -
\nabla \phi_{\mathrm{B_{a}}_{ik}}(y_{i}) \|\\   \label{eq:lips-hi}
& \leq & |\mathcal{A}_{i}| \|x_{i}-y_{i}\|,
\end{eqnarray}
where~$|\mathcal{C}|$ is the cardinality of set~$\mathcal{C}$. We now
have an overall constant~$L_{h}$ for $\nabla h$,
\begin{eqnarray}
  \nonumber
  \|\nabla h(x) - \nabla h(y) \| & = & \sqrt{\sum_{i}\| \nabla
    h_{i}(x_{i}) - \nabla h_{i}(y_{i}) \|^{2}}\\
  \nonumber
  & \leq & \sqrt{\sum_{i} |\mathcal{A}_{i}|^2 \|x_{i}-y_{i}\|^{2}}\\
  \label{eq:lips-h}
  &\leq& \underbrace{\max(|\mathcal{A}_{i}| : i \in
    \mathcal{V})}_{L_{h}} \|x-y\|.
\end{eqnarray}
{We are now able to write $\nabla \hat f$, the gradient of our cost
function, as
\begin{equation}
  \label{eq:grad-f-hat}
  \nabla \hat f(x) = \mathcal{L}x -A^{\top}\mathrm{P_{B}}(Ax) +
  \begin{bmatrix}
    \sum_{k \in \mathcal{A}_{1}}x_{1} - \mathrm{P_{Ba}}_{1k}(x_{1})\\
    \vdots\\
    \sum_{k \in \mathcal{A}_{n}}x_{n} - \mathrm{P_{Ba}}_{nk}(x_{n})
   \end{bmatrix}.
\end{equation}}
A Lipschitz constant $L_{\hat{f}}$ is, thus,
\begin{equation}
  \label{eq:lips-f_hat}
  L_{\hat{f}} = 2\delta_{\mathrm{max}} + \max(|\mathcal{A}_{i}| : i \in \mathcal{V}).
\end{equation}
This constant is easy to precompute by,
\eg, a diffusion algorithm --- c.f.~\cite[Ch. 9]{MesbahiEgerstedt2010}
for more information.

In summary, we can compute the gradient of $\hat{f}$ using
Equation~\eqref{eq:grad-f-hat} and a Lipschitz constant
by~\eqref{eq:lips-f_hat}, which leads us to the algorithms described
in Sections~\ref{sec:synchronous-method}
and~\ref{sec:asynchronous-method} for minimizing $\hat{f}$.

\subsection{Parallel method}
\label{sec:synchronous-method}

Since $\hat f$ has a Lipschitz continuous gradient we can follow
Nesterov's optimal method~\cite{Nesterov1983}. Our
approach is detailed in Algorithm~\ref{alg:synDR}.
\begin{algorithm}[tb]
  \caption{Parallel method}
  \label{alg:synDR}
  \begin{algorithmic}[1] 
    \REQUIRE $L_{\hat{f}}; \{d_{ij} : i \sim j \in \mathcal{E}\};
    \{r_{ik} : i \in \mathcal{V}, k \in \mathcal{A}\};$
    \ENSURE $\hat x$ 
    \STATE $k = 0;$
    \STATE each node $i$ chooses random $x_{i}(0) = x_{i}(-1)$;
    \WHILE{some stopping criterion is not met, each node $i$}
    \STATE $k = k+1$
    \STATE $
    \begin{aligned}[t]
      w_{i} =
      x_{i}(k-1)+\frac{k-2}{k+1}\left(x_{i}(k-1)-x_{i}(k-2)\right);
    \end{aligned}
    $ \label{alg:extrapolated-point}
    \STATE node $i$ broadcasts $w_{i}$ to its neighbors 
    \STATE $
    \begin{aligned}[t]
      \nabla g_{i}(w_{i}) = \delta_{i}w_{i} - \sum_{j \in N_{i}}w_{j}
      + \\
      + \sum_{j \in N_{i}} c_{(i \sim j,i)} \mathrm{P_{B}}_{ij}(w_{i} - w_{j});
    \end{aligned}
    $ \label{alg:gradg}
    \STATE $
    \begin{aligned}[t]
      \nabla h_{i}(w_{i}) = \sum_{k \in \mathcal{A}_{i}} w_{i} -
      \mathrm{P_{B_{a}}}_{ik}(w_{i});
    \end{aligned}
    $\label{alg:gradh}
    \STATE $
    \begin{aligned}[t]
      x_{i}(k) = w_{i} - \frac{1}{L_{\hat f}}(\nabla g_{i}(w_{i}) +
      \nabla h_{i}(w_{i}));
    \end{aligned}
    $ \label{alg:updatex}
    \ENDWHILE
    \RETURN $\hat{x} = x(k)$
  \end{algorithmic}
\end{algorithm} { Step~\ref{alg:extrapolated-point}
  computes the extrapolated points~$w_i$ in a standard application of
  Nesterov's method~\cite{Vandenberghe2014FastProxGrad}.
  Steps~\ref{alg:gradg} and~\ref{alg:gradh}, which constitute the core
  of the algorithm, correspond the $i$-th entry of~$\nabla \hat f$
  given in~\eqref{eq:grad-f-hat}. Specifically, Step~\ref{alg:gradg}
  coincides with the~$i$th entry
  of~$\mathcal{L}x-A^{\top}\mathrm{P_{B}}(Ax)$ in~\eqref{eq:gradg}
  where $c_{(i \sim j,i)}$ denotes the entry $(i \sim j,i)$ in the
  arc-node incidence matrix $C$, and~$\delta_{i}$ is the degree of
  node~$i$. The $i$-th entry of~$\mathcal{L}x$ can be computed by
  node~$i$, from its current position estimate and the position
  estimates of the neighbors, in particular, it
  holds~$(\mathcal{L}x)_{i} = \delta_{i}x_{i} - \sum_{j \in
    N_{i}}x_{j}$.
  The less obvious parallel term is $A^{\top}\mathrm{P_{B}}(Ax)$.  We
  start the analysis by the concatenated
  projections~$\mathrm{P_{B}}(Ax) =
  \{\mathrm{P_{B}}_{ij}(x_{i}-x_{j})\}_{i \sim j \in \mathcal{E}}$.
  Each one of those projections only depends on the edge terminals and
  the noisy measurement~$d_{ij}$. The product with $A^{\top}$ will
  collect, at the entries corresponding to each node, the sum of the
  projections relative to edges where it intervenes, with a positive
  or negative sign depending on the arbitrary edge direction agreed
  upon at the onset of the algorithm. More
  specifically,~$(A^{\top}\mathrm{P_{B}}(Ax))_{i} = \sum_{j \in
    N_{i}}c_{(i \sim j,i)} \mathrm{P_{B}}_{ij}(x_{i}-x_{j}),$
  as presented in Step~\ref{alg:gradg} of
  Algorithm~\ref{alg:synDR}. The last summand in~\eqref{eq:grad-f-hat}
  is simply~$\nabla h(x)$, and the $i$-th entry of~$\nabla h(x)$ is
  given in~\eqref{eq:gradhi}. This can be easily computed
  independently by each node according to Step~\ref{alg:gradh}. The
  position updates in Step~\ref{alg:updatex} of the algorithm require
  the computation of the gradient of the cost w.r.t. the coordinates
  of node~$i$, done in the previous steps, evaluated at the
  extrapolated points~$w_{i}$.}

\subsection{Asynchronous method}
\label{sec:asynchronous-method}
The method described in Algorithm~\ref{alg:synDR} is fully parallel
but still depends on some synchronization between all the nodes --- so
that their updates of the gradient are consistent.  This requirement
can be inconvenient in some applications of sensor networks; to circumvent
it, we present a fully asynchronous method, achieved by means of a
broadcast gossip scheme (c.f.~\cite{shah2009} for an extended survey
of gossip algorithms).% combined with a stochastic nonlinear
%Gauss-Seidel framework~\cite{JakoveticXavierMoura2011}.

Nodes are equipped with independent clocks ticking at random times (say, as Poisson point processes).
When node~$i$'s clock ticks, it performs the
update of its variable~$x_{i}$ and broadcasts the update to its
neighbors. Let the order of node activation be collected
in~$\{\xi_{k}\}_{k \in \naturals}$, a sequence of independent random
variables taking values on the set~$\mathcal{V}$, such that
\begin{equation}
  \label{eq:rv}
  \prob(\xi_{k} = i) = P_{i} > 0.
\end{equation}
Then, the asynchronous update of variable~$x_{i}$ on node~$i$ can
be described as in Algorithm~\ref{alg:asynDR}.
\begin{algorithm}[tb]
  \caption{Asynchronous method}
  \label{alg:asynDR}
  \begin{algorithmic}[1] 
    \REQUIRE $L_{\hat{f}}; \{d_{ij} : i \sim j \in \mathcal{E}\};
    \{r_{ik} : i \in \mathcal{V}, k \in \mathcal{A}\};$
    \ENSURE $\hat x$
    \STATE each node $i$ chooses random $x_{i}(0)$;
    \STATE $k = 0$;
    \WHILE{some stopping criterion is not met, each node $i$}
    \STATE $k = k + 1;$
    \IF {$\xi_k = i$}
    \STATE $
    x_i(k) = \argmin_{w_{i}} \hat f(x_{1}(k-1), \dots, w_{i},
        \dots, x_{n}(k-1))$ \label{alg:asynmin}
    \ELSE
    \STATE $x_i(k) = x_i(k-1)$
    \ENDIF
    %\STATE $
    %\begin{aligned}[t]
    %  x_{i}(k) =
     % \begin{cases}
     %   \argmin_{w_{i}} \hat f(x_{1}(k-1), \dots, w_{i}, \\
    %    \dots, x_{n}(k-1)) \label{alg:asynmin}
     %   &\mbox{if } \xi_{k} = i\\
      %  x_{i}(k-1) &\mbox{otherwise}
     % \end{cases}
   % \end{aligned}
   % $
    \ENDWHILE
    \RETURN $\hat x = x(k)$
  \end{algorithmic}
\end{algorithm}

To compute the minimizer in Step~\ref{alg:asynmin} of
Algorithm~\ref{alg:asynDR} it is useful to recast 
Problem~\eqref{eq:cvx-prob-2} as
\begin{equation}
  \label{eq:cvx-prob-3}
  \minimize_{x} \sum_{i} \left(\sum_{j \in
    N_{i}}\frac{1}{4}\mathrm{d}^{2}_{\mathrm{B}_{ij}}(x_{i}-x_{j}) +
  \sum_{k \in \mathcal{A}_{i}}
  \frac{1}{2}\mathrm{d}^{2}_{\mathrm{B_{a}}_{ik}}(x_{i}) \right),
\end{equation}
where the factor~$\frac{1}{4}$ accounts for the duplicate terms when
considering summations over nodes instead of over edges. By fixing the
neighbor positions, each node solves a single source localization problem;
this setup leads to the Problem
\begin{equation}
  \label{eq:sl-problem}
  \minimize_{x_{i}} \hat f_{sl_{i}}(x_{i}) := \sum_{j \in
    N_{i}}\frac{1}{4}\mathrm{d}^{2}_{\mathrm{B_{s}}_{ij}}(x_{i}) + \sum_{k \in
    \mathcal{A}_{i}} \frac{1}{2}\mathrm{d}^{2}_{\mathrm{B_{a}}_{ik}}(x_{i}),
\end{equation}
where~$\mathrm{B_{s}}_{ij} = \left\{z \in \reals^{p} : \|z - x_{j}\|
  \leq d_{ij}\right\}$.
\begin{algorithm}[tb]
  \caption{Asynchronous update at each node $i$}
  \label{alg:sl}
  \begin{algorithmic}[1]
      \REQUIRE $\xi_{k}; L_{\hat{f}}; \{d_{ij} : j \in N_{i}\}; \{r_{ik} : k
      \in \mathcal{A}_{i}\};$
      \ENSURE $x_{i}(k)$
      \IF{$\xi_{k}$ \NOT $i$} \STATE{$x_i(k) = x_{i}(k-1)$;} 
      \RETURN $x_i(k)$;
      \ENDIF
      \STATE choose random $z(0)= z(-1)$;
      \STATE $l = 0;$
      \WHILE{some stopping criterion is not met}
      \STATE $l = l + 1;$
      \STATE $
      \begin{aligned}[t]
        w = z(l-1) + \frac{l-2}{l+1}(z(l-1) - z(l-2));
      \end{aligned}
      $
      \STATE $
      \begin{aligned}[t]
        \nabla \hat f_{sl_{i}}(w) = \frac{1}{2}\sum_{j \in N_{i}} w -
      \mathrm{P_{B_{S}}}_{ij}(w) + \sum_{k \in \mathcal{A}_{i}}
        w - \mathrm{P_{B_{a}}}_{ik}(w)
      \end{aligned}
      $
      \STATE $
      \begin{aligned}[t]
        z(l) = w - \frac{1}{L_{\hat f}} \nabla \hat f_{sl_{i}}(w)
      \end{aligned}
      $
      \ENDWHILE
      \RETURN $x_{i}(k) = z(l)$
  \end{algorithmic}
\end{algorithm} {Note that the function
  in~\eqref{eq:sl-problem} is continuous and coercive; thus, the
  optimization problem~\eqref{eq:sl-problem} has a solution.}

We solve Problem~\eqref{eq:sl-problem} at each node by employing
Nesterov's optimal accelerated gradient method as described in
Algorithm~\ref{alg:sl}.  The asynchronous method proposed in
Algorithm~\ref{alg:asynDR} converges to the set of minimizers of
function~$\hat f$, as established in Theorem~\ref{th:asy-convergence},
in Section~\ref{sec:convergence-analysis}.

We also propose an inexact version in which nodes do not solve Problem~\eqref{eq:sl-problem} but instead take just one gradient step. That is,
%
%
%For some applications, one would like to have an algorithm which can
%not only reach the solution set, but, on top of that, is guaranteed to
%converge to one single point in the solution set, instead of hopping
%over several minimizers. We present a simple modification to
%Algorithm~\ref{alg:asynDR} that allows us to prove just that. 
simply
replace Step~\ref{alg:asynmin} in Algorithm~\ref{alg:asynDR} by
\begin{equation}
  \label{eq:grad-step-update}
  x_{i}(k) =
    x_{i}(k-1) - \frac 1{L_{\hat f}} \nabla_{i}\hat f (x(k-1))
   \end{equation}
where~$\nabla_{i} \hat f (x_1, \ldots, x_n)$ is the gradient with respect to $x_i$, and assume
\begin{equation}
  \label{eq:prob-uniform}
  \prob \left( \xi_{k} = i \right) = \frac 1n.
\end{equation}
The convergence terms of the resulting algorithm are established in
Theorem~\ref{th:asy-convergence-GD},
Section~\ref{sec:convergence-analysis}.

\section{Theoretical analysis}
\label{sec:convergence-analysis}

A relevant question regarding Algorithms~\ref{alg:synDR}
and~\ref{alg:asynDR} is whether they will return a good solution to
the problem they are designed to solve, after a reasonable amount of
computations. Sections~\ref{sec:conv-synchronous-method}
and~\ref{sec:conv-asynchronous-method} address convergence issues of
the proposed methods, and discuss some of the assumptions on the
problem data. Section~\ref{sec:qual-conv-probl} provides a formal
bound for the gap between the original and the convexified problems.

\subsection{Quality of the convexified problem}
\label{sec:qual-conv-probl}

While evaluating any approximation method it is important to know how
far the approximate optimum is from the original one. In this Section
we will focus on this analysis.

It was already noted in Section~\ref{sec:convex-relaxation}
that~$\phi_{\mathrm{B}_{ij}}(z) = \phi_{\mathrm{S}_{ij}}(z)$
for~$\|z\| \geq d_{ij}$; when the functions differ, for~$\|z\| <
d_{ij}$, we have that~$\phi_{\mathrm{B}_{ij}}(z) = 0$. The same
applies to the terms related to anchor measurements. The optimal value
of function~$f$, denoted by~$f^{\star}$, is bounded by
$
  \hat f^{\star} = \hat f(x^{\star}) \leq f^{\star} \leq f(x^{\star}),
$
where~$x^{\star}$ is the minimizer of the convexified
problem~\eqref{eq:cvx-prob-1}, and~$\hat f^{\star} = \inf_{x} \hat
f(x)$ is the minimum of function~$\hat f$. With these inequalities we
can compute a bound for the optimality gap,
after~\eqref{eq:cvx-prob-1} is solved, as
\begin{IEEEeqnarray}{rCl} \nonumber
  f^{\star} - \hat f^{\star} &\leq& f(x^{\star}) - \hat{f}^{\star}\\\nonumber
  & = & \sum_{i \sim j \in \mathcal{E}}
  \frac{1}{2}\left(\mathrm{d}_{\mathrm{S}_{ij}}^2(x_i^\star -
    x_j^\star) -  \mathrm{d}^2_{\mathrm{B}_{ij}}(x_i^\star -
    x_j^\star) \right)
\\\nonumber
&& + \sum_{i \in \mathcal{V}} \sum_{k \in  \mathcal{A}_i} \frac{1}{2} \left ( 
\mathrm{d}_{\mathrm{S_a}_{ik}}^2(x_i^\star) -  
\mathrm{d}^2_{\mathrm{B_a}_{ik}}(x_i^\star) \right)\\ \nonumber
  & = & \sum_{i \sim j \in \mathcal{E}_2} \frac{1}{2}
  \mathrm{d}_{\mathrm{S}_{ij}}^2(x_i^\star - x_j^\star) + \sum_{i \in
    \mathcal{V}} \sum_{k \in  \mathcal{A}_{2_i}} \frac{1}{2}
\mathrm{d}_{\mathrm{S_a}_{ik}}^2(x_i^\star).\\\label{eq:bound-tight}
% \\\label{eq:bound-less-tight}
%   & \leq & \sum_{i \sim j \in \mathcal{E}_2} \frac{1}{2} d_{ij}^2 +
%   \sum_{i \in \mathcal{V}} \sum_{k \in  \mathcal{A}_{2_i}} \frac{1}{2}
%   r_{ik}^2.
\end{IEEEeqnarray}
In Equation~\eqref{eq:bound-tight}, we denote the set of edges where
the distance of the estimated positions is less than the distance
measurement by~$\mathcal{E}_{2} = \{i \sim j \in \mathcal{E} :
\mathrm{d}^2_{\mathrm{B}_{ij}}(x_i^\star - x_j^\star) = 0\}$, and
similarly~$\mathcal{A}_{2_{i}} = \{k \in \mathcal{A}_{i} :
\mathrm{d}_{\mathrm{B_a}_{ik}}^2(x_i^\star) =
0\}$. Inequality~\eqref{eq:bound-tight} suggests a simple method to
compute a bound for the optimality gap of the solution returned by the
algorithms:
\begin{enumerate}
\item Compute the optimal solution~$x^{\star}$ using
  Algorithm~\ref{alg:synDR} or~\ref{alg:asynDR};
\item Select the terms of the convexified
  problem~\eqref{eq:cvx-prob-1} which are zero;
\item Add the nonconvex costs of each of these edges, as
  in~\eqref{eq:bound-tight}.
\end{enumerate}
Our bound is tighter than the one (available \textit{a priori})  from
applying~\cite[Th. 1]{udellBoyd2014}, which is
\begin{equation}
  \label{eq:udellBoyd}
  f^{\star} - \hat f^{\star} \leq 
  \sum_{i \sim j \in \mathcal{E}} \frac{1}{2} d_{ij}^2 + 
  \sum_{i \in \mathcal{V}} \sum_{k \in  \mathcal{A}_{i}} \frac{1}{2} r_{ik}^2.
\end{equation}

For the one-dimensional example of the star
network costs depicted in Figure~\ref{fig:nonconvexities} 
\begin{figure}[h]
  \centering
  \includegraphics[width=\columnwidth]{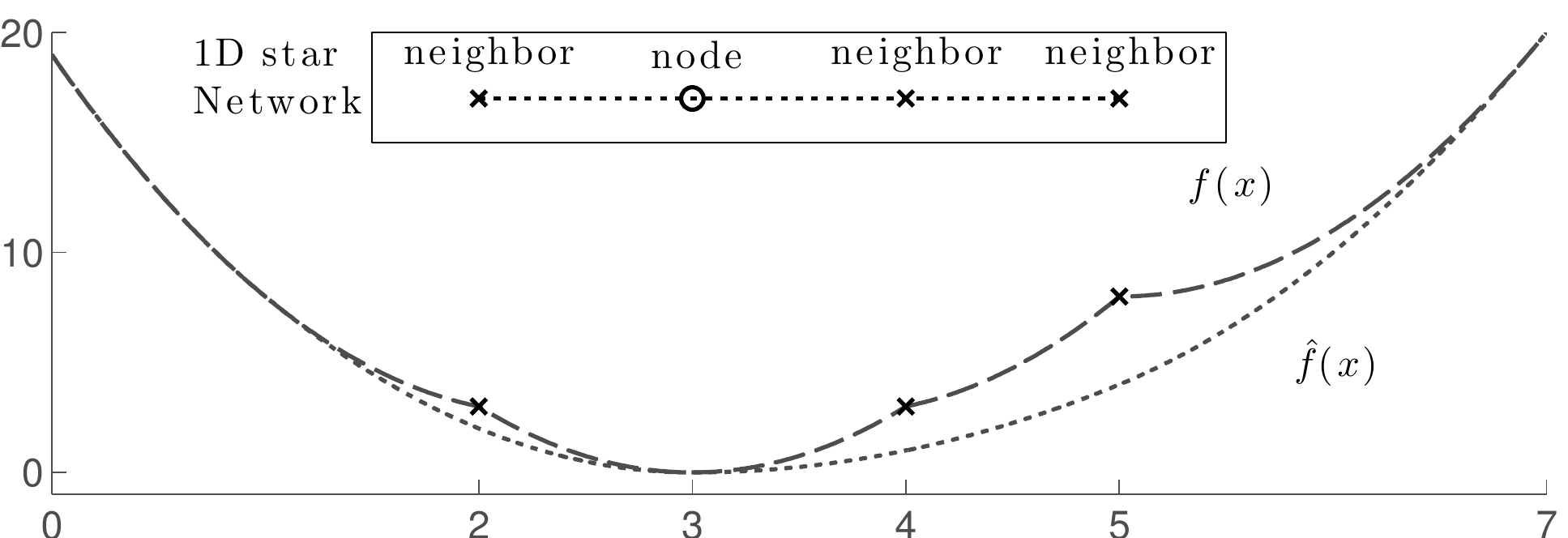}
  \caption{One-dimensional example of the quality of the approximation
    of the true nonconvex cost $f(x)$ by the convexified function
    $\hat{f}(x)$ in a star network. Here the node positioned at $x=3$ has 3
    neighbors.% In the geometric two-dimensional network of 50 sensors used in
    %simulations %(Section~\ref{sec:experimental-results}) we have a    node degree of about~6.1.
    }
  \label{fig:nonconvexities}
\end{figure}
the bounds in~\eqref{eq:bound-tight}, and~\eqref{eq:udellBoyd}
\begin{table}[tb]
  \caption{Bounds on the optimality gap for the example in Figure~\ref{fig:nonconvexities}}
  \label{tab:bounds}
  \centering
  \begin{tabular}[h]{@{}ccc@{}}
    \toprule
    \textbf{$f^{\star} - \hat f^{\star}$} &
    \textbf{Equation~\eqref{eq:bound-tight}} &
    %\textbf{Equation~\eqref{eq:bound-less-tight}} &
    \textbf{Equation~\eqref{eq:udellBoyd}}\\\midrule
    0.0367&0.0487&3.0871\\
    \bottomrule
  \end{tabular}
\end{table}
averaged over 500 Monte Carlo trials are presented in
Table~\ref{tab:bounds}. The true average gap $f^{\star} - \hat
f^{\star}$ is also shown. In the Monte Carlo trials we sampled a zero
mean Gaussian random variable with $\sigma = 0.25$ and obtained a
noisy range measurement as described later by~\eqref{eq:noise}.  These
results show the tightness of the convexified function and how loose
the bound~\eqref{eq:udellBoyd} is when applied to our problem.

\subsection{Parallel method: convergence guarantees and iteration complexity}
\label{sec:conv-synchronous-method}

As Problem~\eqref{eq:cvx-prob-2} is convex and the cost function has a
Lipschitz continuous gradient, Algorithm~\ref{alg:synDR} is known to converge at the optimal rate
$O\left( k^{-2} \right)$
\cite{Nesterov1983},\cite{Nesterov2004}: $\hat f( x(k) ) - \hat f^\star \leq \frac{2 L_{\hat f}}{(k+1)^2} \left\| x(0) - x^\star \right\|^2.$

\subsection{Asynchronous method: convergence guarantees and iteration complexity}
\label{sec:conv-asynchronous-method}

To state the convergence properties of Algorithm~\ref{alg:asynDR} we
only need Assumption~\ref{th:connected-assumption}.
{ \begin{assumption}
  \label{th:connected-assumption}
  There is at least one anchor linked to some sensor and the graph~$\mathcal{G}$ is connected (there is a path between any two sensors).
\end{assumption}}
This assumption holds generally as one needs~$p+1$ anchors to
eliminate translation, rotation, and flip ambiguities while performing
localization in~$\reals^{p}$, {which exceeds the assumption
requirement}. We present two convergence results, ---
Theorem~\ref{th:asy-convergence}, and
Theorem~\ref{th:asy-convergence-GD} --- and the iteration complexity
analysis for Algorithm~\ref{alg:asynDR} in
Proposition~\ref{prop:iteration-cplx}. Proofs of the Theorems are
detailed in Appendix~\ref{sec:theorems}.

The following Theorem establishes the almost sure (\textit{a.s.})
convergence of Algorithm~\ref{alg:asynDR}.
\begin{theorem}[Almost sure convergence of Algorithm~\ref{alg:asynDR}]
  \label{th:asy-convergence}
  Let $\{x(k)\}_{k \in \naturals}$ be the sequence of points produced
  by Algorithm~\ref{alg:asynDR}, or by Algorithm~\ref{alg:asynDR} with
  the update~\eqref{eq:grad-step-update}, and let $\mathcal{X}^{\star}
  = \{x^{\star} : \hat f(x^{\star}) = \hat f^{\star}\}$ be the set of
  minimizers of function $\hat f$ defined
  in~\eqref{eq:cvx-prob-1}. Then it holds:
 \begin{equation}
    \lim_{k\to \infty} \mathrm{d}_{\mathcal{X}^{\star}}\left(x(k)\right) = 0,
    \qquad a.s.
\label{eq:convergence-th}
\end{equation}
\end{theorem}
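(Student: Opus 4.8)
The plan is to handle both the exact block-minimization update in Algorithm~\ref{alg:asynDR} and the single-gradient-step update~\eqref{eq:grad-step-update} through one per-iteration descent inequality, and then to upgrade the resulting stochastic descent into almost sure convergence by a supermartingale argument. First I would fix iteration~$k$ with active node~$\xi_{k}=i$ and invoke the descent lemma, available because~$\hat f$ has an~$L_{\hat f}$-Lipschitz gradient with the constant~\eqref{eq:lips-f_hat}: for any candidate block value~$w_{i}$,
\begin{equation*}
  \hat f(\ldots,w_{i},\ldots) \leq \hat f(x(k-1)) + \langle \nabla_{i}\hat f(x(k-1)), w_{i}-x_{i}(k-1)\rangle + \frac{L_{\hat f}}{2}\|w_{i}-x_{i}(k-1)\|^{2}.
\end{equation*}
Minimizing the right-hand side over~$w_{i}$ reproduces the gradient step~\eqref{eq:grad-step-update} and the value~$\hat f(x(k-1))-\frac{1}{2L_{\hat f}}\|\nabla_{i}\hat f(x(k-1))\|^{2}$; since exact minimization over block~$i$ can only do at least as well, \emph{both} updates obey the common bound
\begin{equation*}
  \hat f(x(k)) \leq \hat f(x(k-1)) - \frac{1}{2L_{\hat f}}\|\nabla_{i}\hat f(x(k-1))\|^{2}.
\end{equation*}

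Next I would take the conditional expectation with respect to the history~$\mathcal{F}_{k-1}$, using~$\prob(\xi_{k}=i)=P_{i}>0$ (respectively~$P_{i}=1/n$ for the inexact variant). Writing~$P_{\min}=\min_{i}P_{i}>0$ and using the block decomposition~$\sum_{i}P_{i}\|\nabla_{i}\hat f\|^{2}\geq P_{\min}\|\nabla\hat f\|^{2}$ gives
\begin{equation*}
  \expect\!\left[\hat f(x(k))\mid\mathcal{F}_{k-1}\right] \leq \hat f(x(k-1)) - \frac{P_{\min}}{2L_{\hat f}}\|\nabla\hat f(x(k-1))\|^{2}.
\end{equation*}
Because~$\hat f$ is coercive it is bounded below, so~$V_{k}:=\hat f(x(k))-\hat f^{\star}\geq 0$; moreover the pathwise decrease gives~$0\leq V_{k}\leq V_{0}$, so~$V_{k}$ is a bounded nonnegative supermartingale with decrement~$\beta_{k}=\frac{P_{\min}}{2L_{\hat f}}\|\nabla\hat f(x(k))\|^{2}$. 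The Robbins--Siegmund theorem then yields, almost surely, that~$\hat f(x(k))$ converges and~$\sum_{k}\|\nabla\hat f(x(k))\|^{2}<\infty$, whence~$\nabla\hat f(x(k))\to 0$.

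Finally, coercivity together with the monotonicity~$\hat f(x(k))\leq\hat f(x(0))$ confines~$\{x(k)\}$ to a compact sublevel set, so the trajectory is bounded almost surely. Any accumulation point~$\bar x$ satisfies~$\nabla\hat f(\bar x)=0$ by continuity of~$\nabla\hat f$, and since~$\hat f$ is convex this forces~$\bar x\in\mathcal{X}^{\star}$; a subsequence argument (were~$\mathrm{d}_{\mathcal{X}^{\star}}(x(k))$ not to vanish, a convergent subsequence would exhibit a minimizer at positive distance from~$\mathcal{X}^{\star}$, a contradiction) then delivers~\eqref{eq:convergence-th}. The hard part will be unifying the two update rules under a single descent inequality and verifying the hypotheses of Robbins--Siegmund (nonnegativity, adaptedness, and integrability of~$V_{k}$); the step from a vanishing gradient to convergence of the distance to the possibly non-singleton set~$\mathcal{X}^{\star}$ is also delicate and seems to require the compactness argument above rather than a direct contraction estimate.
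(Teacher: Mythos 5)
Your proposal is correct and follows essentially the same route as the paper, which likewise establishes a per-iteration descent inequality covering both the exact and inexact updates, converts it into a nonnegative supermartingale to get $\sum_k \|\nabla \hat f(x(k))\|^2 < \infty$ and $\nabla \hat f(x(k)) \to 0$ almost surely (Lemma~\ref{lem:vanish-grad}), and then uses coercivity, compactness of the sublevel set, and a subsequence/contradiction argument (Lemmas~\ref{lem:basic-properties} and~\ref{lem:cost-value-convergence}) to conclude that $\mathrm{d}_{\mathcal{X}^{\star}}(x(k)) \to 0$. The only cosmetic difference is that you invoke Robbins--Siegmund directly where the paper constructs the supermartingale $Y_k$ explicitly and cites a generic nonnegative supermartingale convergence theorem.
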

In words, with probability one, the iterates $x(k)$ will approach the
set $\mathcal{X}^{\star}$ of minimizers of $\hat f$; this does not imply
that $\{x(k)\}_{k \in \naturals}$ will converge to one single $x^{\star}
\in \mathcal{X}^{\star}$, but it does imply that $\lim_{k \to \infty} \hat
f(x(k)) = \hat f ^{\star}$, since $\mathcal{X}^{\star}$ is a compact set, as
proven in Appendix~\ref{sec:auxil-lemmas},
Lemma~\ref{lem:basic-properties}.

\begin{theorem}[Almost sure convergence to a point]
  \label{th:asy-convergence-GD}
  Let~$\{x(k)\}_{k \in \naturals}$ be a sequence of points generated
  by Algorithm~\ref{alg:asynDR}, with the
  update~\eqref{eq:grad-step-update} in Step~\ref{alg:asynmin}, and
  let all nodes start computations with uniform probability. Then, with probability one,
  there exists a minimizer of~$\hat f$, denoted by $x^{\star} \in
  \mathcal{X}^{\star}$, such that
  \begin{equation}
    \label{eq:asy-convergence-GD}
    x(k) \rightarrow x^{\star}. 
  \end{equation}
\end{theorem}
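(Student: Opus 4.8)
The plan is to recognize Algorithm~\ref{alg:asynDR} with the update~\eqref{eq:grad-step-update} and uniform activation~\eqref{eq:prob-uniform} as a randomized block-coordinate gradient method, and to upgrade the set-convergence of Theorem~\ref{th:asy-convergence} to single-point convergence through a \emph{stochastic Fej\'er monotonicity} argument combined with Opial's lemma. Theorem~\ref{th:asy-convergence} already supplies one of the two ingredients we need: almost surely $\mathrm{d}_{\mathcal{X}^{\star}}(x(k))\to 0$, so every accumulation point of a sample path lies in the compact convex minimizer set $\mathcal{X}^{\star}$ of Lemma~\ref{lem:basic-properties}. The remaining ingredient is that, almost surely, $\lim_{k}\|x(k)-x^{\star}\|$ exists for \emph{every} $x^{\star}\in\mathcal{X}^{\star}$; these two facts together force a unique accumulation point, hence convergence to a single minimizer.

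First I would fix an arbitrary $x^{\star}\in\mathcal{X}^{\star}$ and take $\mathcal{F}_{k}$ to be the $\sigma$-algebra generated by $\xi_{1},\dots,\xi_{k}$. Expanding $\|x(k)-x^{\star}\|^{2}$ after the single-block step~\eqref{eq:grad-step-update}, taking the conditional expectation over the uniform choice of $\xi_{k}$, and using the Baillon--Haddad co-coercivity inequality (valid because $\hat f$ is convex with $L_{\hat f}$-Lipschitz gradient and $\nabla\hat f(x^{\star})=0$) to absorb the cross term, I obtain the stochastic Fej\'er inequality
\begin{equation*}
\expect\!\left[\|x(k)-x^{\star}\|^{2}\mid\mathcal{F}_{k-1}\right]
\le \|x(k-1)-x^{\star}\|^{2} - \tfrac{1}{nL_{\hat f}^{2}}\,\|\nabla\hat f(x(k-1))\|^{2}.
\end{equation*}
Robbins--Siegmund supermartingale convergence then yields, almost surely, that $\|x(k)-x^{\star}\|^{2}$ converges to a finite limit and that $\sum_{k}\|\nabla\hat f(x(k))\|^{2}<\infty$.

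The delicate point is that the previous step produces, for each fixed $x^{\star}$, a probability-one event that may depend on $x^{\star}$, whereas Opial's argument needs $\lim_{k}\|x(k)-x^{\star}\|$ to exist simultaneously for all $x^{\star}\in\mathcal{X}^{\star}$. I would close this gap by choosing a countable dense subset $D\subset\mathcal{X}^{\star}$, intersecting the corresponding countably many probability-one events, and then extending the existence of the limit from $D$ to all of $\mathcal{X}^{\star}$ by a $3\varepsilon$-argument that exploits the $1$-Lipschitz dependence of $\|x(k)-\cdot\|$ on its second argument together with the boundedness of $\{x(k)\}$ (itself a byproduct of Fej\'er monotonicity). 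On the resulting probability-one event I would run the deterministic Opial argument pathwise: if $x'$ and $x''$ are two accumulation points, both lie in $\mathcal{X}^{\star}$ by Theorem~\ref{th:asy-convergence}, both $\lim_{k}\|x(k)-x'\|$ and $\lim_{k}\|x(k)-x''\|$ exist, and the polarization identity forces $\langle x(k),x'-x''\rangle$ to converge; evaluating along the two subsequences gives $\|x'-x''\|^{2}=0$, so the accumulation point is unique and $x(k)\to x^{\star}\in\mathcal{X}^{\star}$ almost surely. The main obstacle is precisely this measure-theoretic passage from pointwise-in-$x^{\star}$ to uniform-in-$x^{\star}$ almost-sure statements over a possibly uncountable $\mathcal{X}^{\star}$; the co-coercivity-based supermartingale and the Opial step are otherwise routine.
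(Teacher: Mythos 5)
Your proposal is correct and follows essentially the same route as the paper's proof: fix an arbitrary $x^{\star}\in\mathcal{X}^{\star}$, bound $\expect\left[\|x(k)-x^{\star}\|^{2}\mid\mathcal{F}_{k-1}\right]$ after the single-block step, apply Robbins--Siegmund, and conclude with the Opial-type uniqueness argument (which the paper delegates to the end of the proof of \cite[Prop.~9]{Bertsekas2011}, the countable-dense-subset handling of null sets included). The only substantive difference is local: the paper merely drops the nonnegative cross term $(x(k-1)-x^{\star})^{\top}\nabla\hat f(x(k-1))$ by monotonicity of the gradient, obtaining an almost-supermartingale with the additive perturbation $\frac{1}{nL_{\hat f}^{2}}\|\nabla\hat f(x(k-1))\|^{2}$ whose summability must be imported from Lemma~\ref{lem:vanish-grad}, whereas your Baillon--Haddad step converts that perturbation into a negative drift, giving a genuine Fej\'er inequality that yields both the convergence of $\|x(k)-x^{\star}\|^{2}$ and the summability of the gradient norms in a single application of Robbins--Siegmund, independently of Lemma~\ref{lem:vanish-grad}.
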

This result tells us that the iterates of Algorithm~\ref{alg:asynDR}
with the modified Step~\ref{alg:asynmin} stated in
Equation~\eqref{eq:grad-step-update} not only converge to the solution
set, but also guarantees that they will not be jumping around the
solution set~$\mathcal{X}^{\star}$ (unlikely to occur in
Algorithm~\ref{alg:asynDR}, but not ruled out by the analysis). One of
the practical benefits of Theorem~\ref{th:asy-convergence-GD} is that
the stopping criterion can safely probe the stability of the estimates
along iterations.  To the best of our knowledge, this kind of strong
type of convergence (the whole sequence converges to a point in
${\mathcal X}^\star$) was not established previously in the context of
randomized approaches for convex functions with Lipschitz continuous
gradients, though it was derived previously for randomized
proximal-based minimizations of a large number of convex functions,
cf.~\cite[Proposition~9]{Bertsekas2011}. {We emphasize
  that what prevents the latter to apply to the exact version of
  Algorithm~\ref{alg:asynDR} is the ambiguity in choosing estimates
  when~$\mathcal{X}^{\star}$ is not a singleton. A possible approach
  to circumvent non-uniqueness of minimizers in~\eqref{eq:sl-problem}
  is to add a proximal term (as this makes the function strictly
  convex). However, the proximal terms tend to slow down
  convergence. Although overall strong convergence is still an open
  issue with this device, we saw in preliminary experiments that the
  proximal terms slowed down the speed of convergence (up to one order
  of magnitude of degradation in the iteration count).}

\begin{proposition}[Iteration complexity for Algorithm~\ref{alg:asynDR}]
  \label{prop:iteration-cplx}
  Let~$\{x(k)\}_{k \in \naturals}$ be a sequence of points generated
  by Algorithm~\ref{alg:asynDR}, with the
  update~\eqref{eq:grad-step-update} in Step~\ref{alg:asynmin}, and
  let the nodes be activated with equal probability. Choose $0 < \epsilon < \hat f( x(0) ) - \hat f^\star$ and $\rho \in (0,1)$.
There exists a constant $b(\rho, x(0))$ such that
  \begin{equation}
    \label{eq:iteration-cplx-p}
    \prob \left( \hat f(x(k)) - \hat f^{\star} \leq \epsilon \right)
    \geq 1-\rho
  \end{equation} for all 
  \begin{equation}
    \label{eq:iteration-cplx-k}
k \geq    K = \frac{2 n b(\rho, x(0))}{\epsilon} + 2 - n.
  \end{equation}
\end{proposition}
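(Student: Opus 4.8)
The plan is to recognize Algorithm~\ref{alg:asynDR} with the update~\eqref{eq:grad-step-update} and uniform activation~\eqref{eq:prob-uniform} as randomized block-coordinate gradient descent on the convex, $L_{\hat f}$-smooth function $\hat f$, and then to run the classical three-step complexity argument for such schemes: a per-iteration expected descent, a scalar quadratic recursion obtained from convexity, and a Markov inequality to pass from expected suboptimality to the high-probability statement~\eqref{eq:iteration-cplx-p}.

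First I would establish a deterministic per-step decrease. When node $i$ is activated only block $i$ moves, by $-\tfrac1{L_{\hat f}}\nabla_i\hat f(x(k-1))$. Since the block-wise gradient $\nabla_i\hat f$ is Lipschitz with constant at most the global $L_{\hat f}$, the descent lemma applied along coordinate $i$ gives
\[
  \hat f(x(k)) \le \hat f(x(k-1)) - \frac{1}{2L_{\hat f}}\bigl\|\nabla_i\hat f(x(k-1))\bigr\|^2 .
\]
In particular $\hat f(x(k))$ is nonincreasing, so every iterate stays in $\{x:\hat f(x)\le\hat f(x(0))\}$, which is compact because $\hat f$ is coercive (Lemma~\ref{lem:basic-properties}); hence $R:=\sup\{\mathrm{d}_{\mathcal{X}^{\star}}(x):\hat f(x)\le\hat f(x(0))\}$ is a finite, deterministic constant depending only on $x(0)$. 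Taking the conditional expectation over the uniformly chosen $i$ and using $\tfrac1n\sum_i\|\nabla_i\hat f\|^2=\tfrac1n\|\nabla\hat f\|^2$ yields
\[
  \expect\!\left[\hat f(x(k))\mid\mathcal{F}_{k-1}\right] \le \hat f(x(k-1)) - \frac{1}{2nL_{\hat f}}\bigl\|\nabla\hat f(x(k-1))\bigr\|^2 .
\]

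Next I would convert this into a scalar recursion. Writing $\Delta_k=\hat f(x(k))-\hat f^{\star}$ and letting $\bar x=\Pi_{\mathcal{X}^{\star}}(x(k-1))$, convexity gives $\Delta_{k-1}\le\langle\nabla\hat f(x(k-1)),x(k-1)-\bar x\rangle\le R\,\|\nabla\hat f(x(k-1))\|$, so $\|\nabla\hat f(x(k-1))\|^2\ge\Delta_{k-1}^2/R^2$ and
\[
  \expect\!\left[\Delta_k\mid\mathcal{F}_{k-1}\right]\le\Delta_{k-1}-c\,\Delta_{k-1}^2,\qquad c=\frac{1}{2nL_{\hat f}R^2}.
\]
Taking total expectations and applying Jensen's inequality ($\expect[\Delta_{k-1}^2]\ge(\expect\Delta_{k-1})^2$) gives the deterministic recursion $v_k\le v_{k-1}-c\,v_{k-1}^2$ for $v_k=\expect[\Delta_k]$. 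Choosing $B=B(x(0))$ large enough that $B\ge L_{\hat f}R^2$ and $v_0\le\tfrac{2nB}{n-2}$, a short induction (using that $t\mapsto t-ct^2$ is increasing on the relevant range and that $(m-1)(m+1)\le m^2$ with $m=k+n-3$) shows $v_k\le\frac{2nB}{k+n-2}$.

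Finally, Markov's inequality gives $\prob(\Delta_k>\epsilon)\le\expect[\Delta_k]/\epsilon\le\frac{2nB}{\epsilon(k+n-2)}$; requiring the right-hand side to be at most $\rho$ and solving for $k$ produces exactly the threshold~\eqref{eq:iteration-cplx-k} with $b(\rho,x(0))=B(x(0))/\rho$, establishing~\eqref{eq:iteration-cplx-p}. I expect the main obstacle to be the bookkeeping that makes $R$ a fixed deterministic quantity — this is precisely what lets the stochastic recursion collapse to the clean scalar recursion — together with the induction that yields the exact $k+n-2$ denominator; the descent-lemma step and the closing Markov bound are routine.
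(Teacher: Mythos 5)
Your proposal is correct, but it takes a genuinely different route from the paper. The paper's proof of Proposition~\ref{prop:iteration-cplx} is a one-line citation: since $\hat f$ is convex with Lipschitz gradient, the claim is ``trivially deduced'' from~\cite[Th.~2]{LuXiao2013}, with $b(\rho,x(0))$ read off from inequality~(19) there. You instead reconstruct a self-contained elementary proof of the randomized block-coordinate complexity bound: the block-wise descent lemma gives the sure monotone decrease (hence confinement to the compact sublevel set and a finite radius $R$ depending only on $x(0)$, by coercivity from Lemma~\ref{lem:basic-properties}), convexity converts the expected descent into the scalar recursion $v_k \leq v_{k-1} - c\,v_{k-1}^2$, the standard induction yields $v_k \leq \tfrac{2nB}{k+n-2}$, and Markov's inequality delivers the high-probability statement with $b(\rho,x(0)) = B(x(0))/\rho$. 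This is a valid proof of the proposition as stated, and it has the virtue of making explicit what the cited theorem hides. The one substantive difference worth noting is the dependence on $\rho$: your Markov step gives $b$ scaling as $1/\rho$, whereas the argument in~\cite{LuXiao2013} that the paper invokes obtains a logarithmic dependence $\log(1/\rho)$ via a more refined (martingale/restricted-level-set) analysis. Since the proposition only asserts existence of \emph{some} constant $b(\rho,x(0))$, your weaker $\rho$-dependence is immaterial here; only minor bookkeeping remains in your induction (the base-case condition $v_0 \leq \tfrac{2nB}{n-2}$ degenerates for $n \leq 2$, and the hypothesis $0 < \epsilon < \hat f(x(0)) - \hat f^\star$ should be used to keep $K$ meaningful), none of which affects the substance.
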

  The constant~$b( x(0), {\rho})$ can be computed from inequality~(19)
  in~\cite{LuXiao2013}; it depends only on the initialization and the chosen~$\rho$.
Proposition~\ref{prop:iteration-cplx} is saying that, with high
probability, the function value~$\hat f(x(k))$ for all~$k \geq K$ will
be at a distance~$\epsilon$ of the optimal, and the number of
iterations~$K$ depends inversely on the chosen~$\epsilon$.
% Although
%the quantity~$b_{\rho}$ is not easily computable, it is important to
%know that in finite time one has high probability guarantees
%concerning a target solution accuracy~$\epsilon$.
\begin{proof}[Proof of Proposition~\ref{prop:iteration-cplx}]
  As~$\hat f$ is differentiable and has Lipschitz gradient, the result
  is trivially deduced from~\cite[Th.~2]{LuXiao2013}.
\end{proof}

% Theorem~\ref{th:asy-convergence} tells us that
% Algorithm~\ref{alg:asynDR} converges to the solution set in infinite
% time --- which is in itself a valuable guarantee. But it would be more
% useful in practice if we knew that in a finite number of iterations we
% would have a good enough estimate of the solution of
% Problem~\eqref{eq:cvx-prob-2}. The next result addresses this
% question.
% \begin{theorem}
%   \label{th:expected-iterations}
%   Let~$K_{\epsilon} \in \naturals$ be the iteration number above which
%   the sequence~$\{x(k)\}_{k \in \naturals}$ is at least as close
%   of~$\mathcal{X}^{\star}$ as~$\epsilon > 0$,
%   \ie,~$\mathrm{d}_{\mathcal{X}^{\star}}\left(x(k)\right) \leq \epsilon$ for
%   all~$k \geq K_{\epsilon}$. Then,

%   \begin{enumerate}
%     \item $K_{\epsilon}$ is finite, a.s.;

%     \item $\begin{aligned}[t]
%       \label{eq:expected-iterations-th}
%       \expect \left[ K_{\epsilon} \right] \leq \frac{\hat f (x(0)) -
%         \hat f^{\star}}{b_{\epsilon}}.
%     \end{aligned}
%     $
%   \end{enumerate}
% \end{theorem}
% Theorem~\ref{th:expected-iterations} is saying that, given
% an~$\epsilon > 0$, if we stop Algorithm~\ref{alg:asynDR} at some
% \emph{finite} integer value~$\left \lceil \frac{\hat f (x(0)) - \hat
%     f^{\star}}{b_{\epsilon}}\right\rceil$, the returned $x$ will be,
% in average, at a distance~$\epsilon$ of the solution
% set~$\mathcal{X}^{\star}$. This is truly useful in the context of localization,
% where one can define an accuracy depending on the network purpose.

\section{Numerical experiments}
\label{sec:experimental-results}

In this Section we present experimental results that demonstrate the
superior performance of our methods when compared with four state of
the art algorithms: Euclidean Distance Matrix (EDM) completion
presented in~\cite{OguzGomesXavierOliveira2011}, Semidefinite Program
(SDP) relaxation and Edge-based Semidefinite Program (ESDP)
relaxation, both implemented in~\cite{SimonettoLeus2014}, and a
sequential projection method (PM)
in~\cite{GholamiTetruashviliStromCensor2013} optimizing the same
convex underestimator as the present work, with a different
algorithm. The fist two methods --- EDM completion and SDP relaxation
--- are centralized, whereas the ESDP relaxation and PM are
distributed.

\subsubsection*{Methods}
\label{sec:methods}
We conducted simulations with two uniquely localizable geometric
networks with sensors randomly distributed in a two-dimensional square
of size~$1 \times 1$ with~$4$ anchors in the corners of the
square. Network~1 has~10~sensor nodes with an average node
degree\footnote{To characterize the used networks we resort to the
  concepts of \emph{node degree}~$k_{i}$, which is the number of edges
  connected to node~$i$, and \emph{average node degree}~$\langle k
  \rangle = 1/n \sum_{i=1}^{n}k_{i}$.} of~$4.3$, while network~2
has~50~sensor nodes and average node degree of~$6.1$.  The ESDP method
was only evaluated in network~1 due to simulation time constraints,
since it involves solving an SDP at each node, and each iteration. The
noisy range measurements are generated according to
\begin{align}
  \label{eq:noise}
  d_{ij} &=& | \|x_{i}^{\star} - x_{j}^{\star}\| + \nu_{ij} |, \; r_{ik} &=& | \|x_{i}^{\star} - a_{k}\| + \nu_{ik} |,
\end{align}
where~$x_{i}^{\star}$ is the true position of node~$i$, and~$\{\nu_{ij} :
i \sim j \in \mathcal{E}\} \cup \{\nu_{ik} : i \in \mathcal{V}, k \in
\mathcal{A}_{i}\}$ are independent Gaussian random variables with zero
mean and standard deviation~$\sigma$.  The accuracy of the algorithms
is measured by the original nonconvex cost value
in~\eqref{eq:snlOptimizationProblem} and by the Root Mean Squared
Error (RMSE) per sensor, defined as
{\begin{equation}
  \label{eq:rmse}
  \mathrm{RMSE} = \sqrt{\frac{1}{n}\left(\frac{1}{M} \sum_{m=1}^{M}
      \|x^{\star} - \hat x(m) \|^{2}\right)},
\end{equation}}
where~$M$ is the number of Monte Carlo trials performed. 
%We adopted
%the stopping criteria chosen by each one of the methods and, thus, the
%results are analyzed in groups with similar stopping criteria.

\subsection{{Assessment of the convex underestimator performance}}
\label{sec:relaxation-quality}

The first experiment aimed at exploring {the performance
  of the convex underestimator in~\eqref{eq:cvx-prob-1} when compared
  with two other state of the art convexifications.} For the proposed
disk relaxation~\eqref{eq:cvx-prob-1}, Algorithm~\ref{alg:synDR} was
stopped when the gradient norm~$\|\nabla \hat f(x)\|$ reached
$10^{-6}$ while both EDM completion and SDP relaxation were solved
with the default \emph{SeDuMi} solver~\cite{Sedumi98guide}
\texttt{eps} value of~$10^{-9}$, so that algorithm properties did not
mask the real quality of the relaxations.
\begin{figure}[h]
  \centering
  \includegraphics[width=\columnwidth]{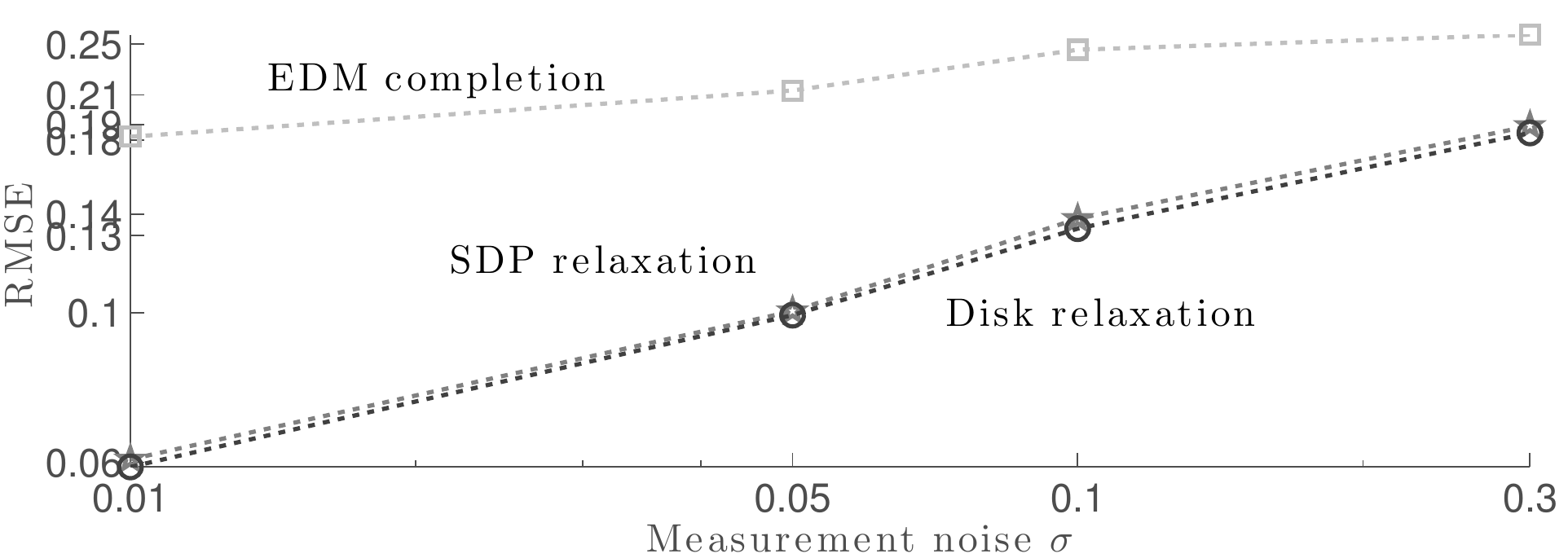}
  \caption{Relaxation quality: Root mean square error comparison of
    EDM completion in~\cite{OguzGomesXavierOliveira2011}, SDP
    relaxation in~\cite{SimonettoLeus2014} and the disk
    relaxation~\eqref{eq:cvx-prob-1}, used in the present work;
    measurements were perturbed with noise with different values for
    the standard deviation~$\sigma$. The disk relaxation approach
    in~\eqref{eq:cvx-prob-1} improved on the RMSE values of both EDM
    completion and SDP relaxation for all noise levels, even though it
    does not rely on the SDP machinery. The performance gap to EDM
    completion is substantial.}
  \label{fig:costFnQualityRMSEvsSigma}
\end{figure}
Figures~\ref{fig:costFnQualityRMSEvsSigma}
and~\ref{fig:costFnQualityRMSEvsTimeStdev0_1} report the results of
the experiment with~$50$ Monte Carlo trials over network~2 and
measurement noise with~$\sigma = [0.01, \: 0.05, \: 0.1, \: 0.3]$; so,
we had a total of~$200$ runs, equally divided by the~$4$ noise
levels. In Figure~\ref{fig:costFnQualityRMSEvsSigma} we can see that
the disk relaxation in~\eqref{eq:cvx-prob-1} has better performance
for all noise levels.
\begin{figure}[h]
  \centering
  \includegraphics[width=\columnwidth]{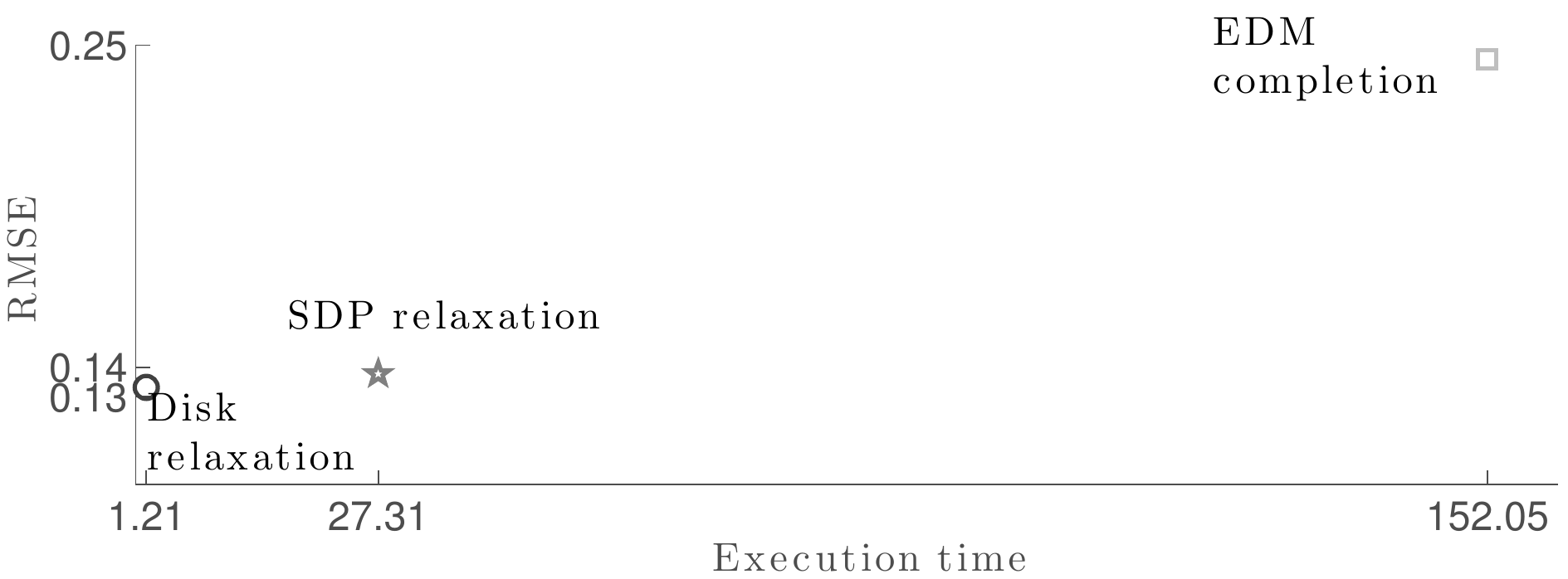}
  \caption{Relaxation quality: Comparison of the best achievable root
    mean square error \textit{versus} overall execution time of the
    algorithms. Measurements were contaminated with noise with $\sigma
    = 0.1$. Although disk relaxation~\eqref{eq:cvx-prob-1} has a
    distributed implementation, running it sequentially can be faster
    by one order of magnitude than the centralized methods.}
  \label{fig:costFnQualityRMSEvsTimeStdev0_1}
\end{figure}
Figure~\ref{fig:costFnQualityRMSEvsTimeStdev0_1} locates the results
of optimizing the three convex functions for the same problems in RMSE
\emph{versus} execution time, indicating the complexity of the
optimization of the considered costs. The convex
surrogate~\eqref{eq:cvx-prob-1} used in the present work combined with
our methods is faster by at least one order of magnitude.

\subsection{Performance of distributed optimization algorithms}
\label{sec:optim-algor-perf}

To measure the performance of the presented Algorithm~\ref{alg:synDR}
in a distributed setting we compared it with the state of the art
methods in~\cite{GholamiTetruashviliStromCensor2013} and the
distributed algorithm in~\cite{SimonettoLeus2014}. The results are
shown, respectively, in Figures~\ref{fig:GvsDRrmsevscommunications50s}
and~\ref{fig:DSvsDRrmsevsnoise}. The experimental setups were
different, since the authors proposed different stopping criteria for
their algorithms and, in order to do a fair comparison, we ran our
algorithm with the specific criterion set by each benchmark
method. Also, to compare with the distributed ESDP method
in~\cite{SimonettoLeus2014}, we had to use a smaller network
of~$10$~sensors because of simulation time constraints --- as the ESDP
method entails solving an SDP problem at each node, the simulation
time becomes prohibitively large, at least using a general purpose solver. The
number of Monte Carlo trials was~$32$, with~$3$~noise levels, leading
to~$96$ realizations for each noisy measurement.
\begin{figure}[h]
  \centering
  \includegraphics[width=\columnwidth]{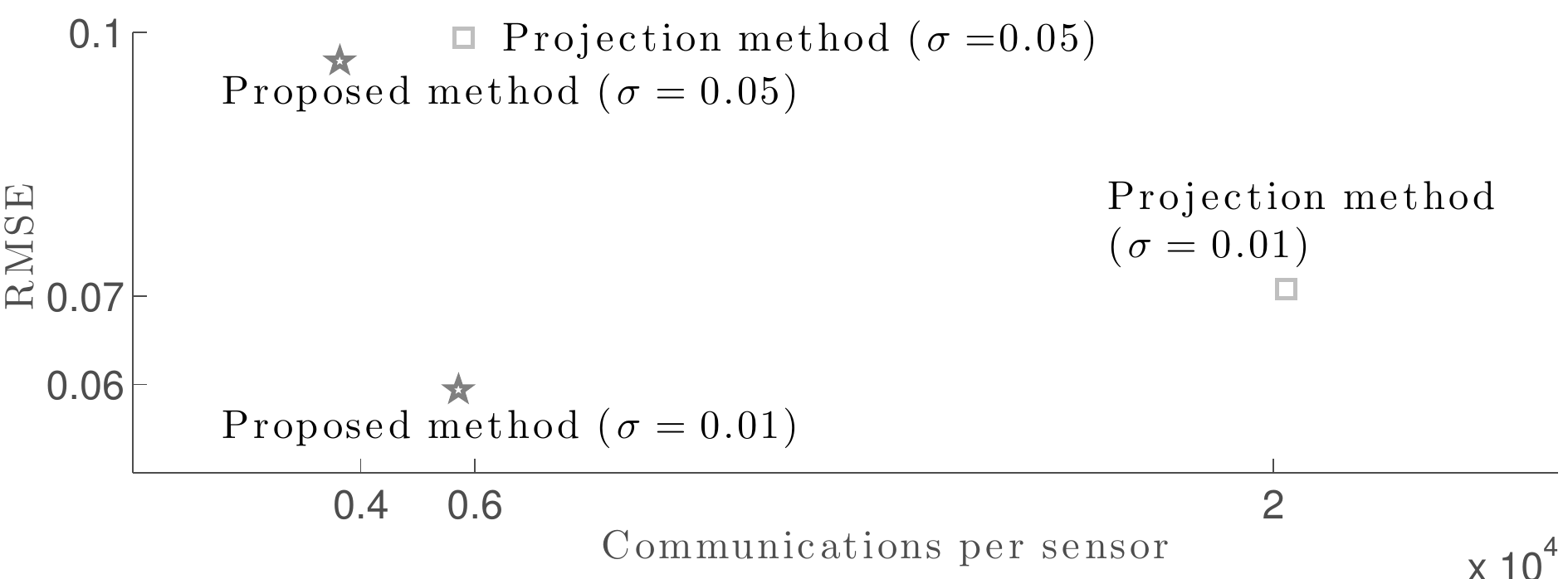}
  \caption{Performance of the proposed method {in Algorithm~1}
    and of the Projection method presented
    in~\cite{GholamiTetruashviliStromCensor2013}. The stopping
    criterion for both algorithms was a relative improvement
    of~$10^{-6}$ in the estimate. The proposed method uses fewer
    communications to achieve better RMSE for the tested noise
    levels. Our method outperforms the projection method with one
    forth of the number of communications for a noise level
    of~$0.01$.}
\label{fig:GvsDRrmsevscommunications50s}
\end{figure}
So, in the experiment illustrated in
Figure~\ref{fig:GvsDRrmsevscommunications50s}, the stopping criterion
for both the projection method and the presented method was the relative
improvement of the solution; we stress that this is not a distributed
stopping criterion,  we adopted it just for algorithm comparison. We can
see that the proposed method fares better not only in RMSE but,
foremost, in communication cost. The experiment comprised~$120$~Monte
Carlo trials and two noise levels.

\begin{figure}[h]
  \centering
  \includegraphics[width=\columnwidth]{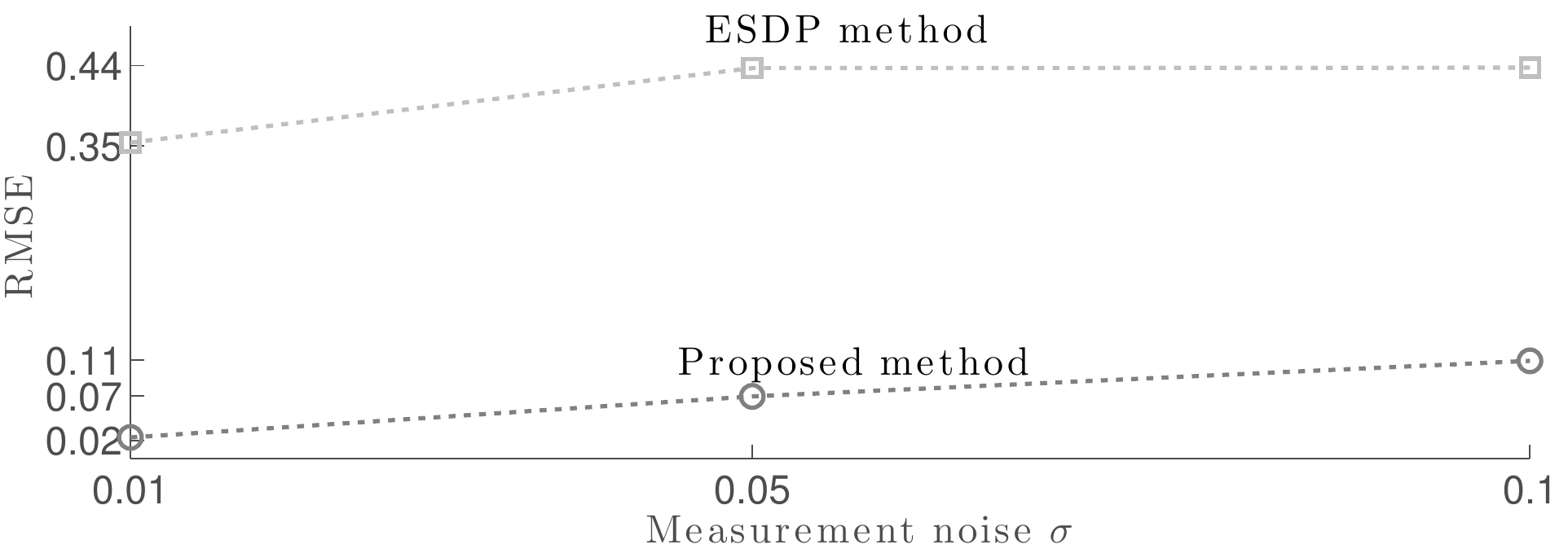}
  \caption{Performance of the proposed method {in
      Algorithm~1}
    and of the ESDP method in~\cite{SimonettoLeus2014}. The stopping
    criterion for both algorithms was the number of algorithm
    iterations. The performance advantage of the proposed method in
    Algorithm~\ref{alg:synDR} is even more remarkable when considering
    the number of communications presented in
    Table~\ref{tab:communications}.}
\label{fig:DSvsDRrmsevsnoise}
\end{figure}
\begin{table}[h]
  \caption{Number of communications per sensor for the results in
    Fig.~\ref{fig:DSvsDRrmsevsnoise}}
  \label{tab:communications}
  \centering
  \begin{tabular}[h]{@{}cc@{}}
    \toprule
    \textbf{ESDP method}&\textbf{{Algorithm~\ref{alg:synDR}}}\\\midrule
    21600&2000\\
\bottomrule
  \end{tabular}
\end{table}
From the analysis of both Figure~\ref{fig:DSvsDRrmsevsnoise} and
Table~\ref{tab:communications} we can see that the ESDP method is one
order of magnitude worse in RMSE performance, using one order of
magnitude more communications, than Algorithm~\ref{alg:synDR}.

{
\subsection{Performance of the asynchronous algorithm}
\label{sec:parall-asynchr-perf}

A second experiment consisted on testing the performance of the
parallel and the asynchronous flavors of our method, presented
respectively in Algorithms~\ref{alg:synDR} and~\ref{alg:asynDR},
{the latter with the exact update}. The metric was the
value of the convex cost function~$\hat f$ in~\eqref{eq:cvx-prob-1}
evaluated at each algorithm's estimate of the minimum. To have a fair
comparison, both algorithms were allowed to run until they reached a
preset number of communications.
\begin{figure}[h]
  \centering
  \includegraphics[width=\columnwidth]{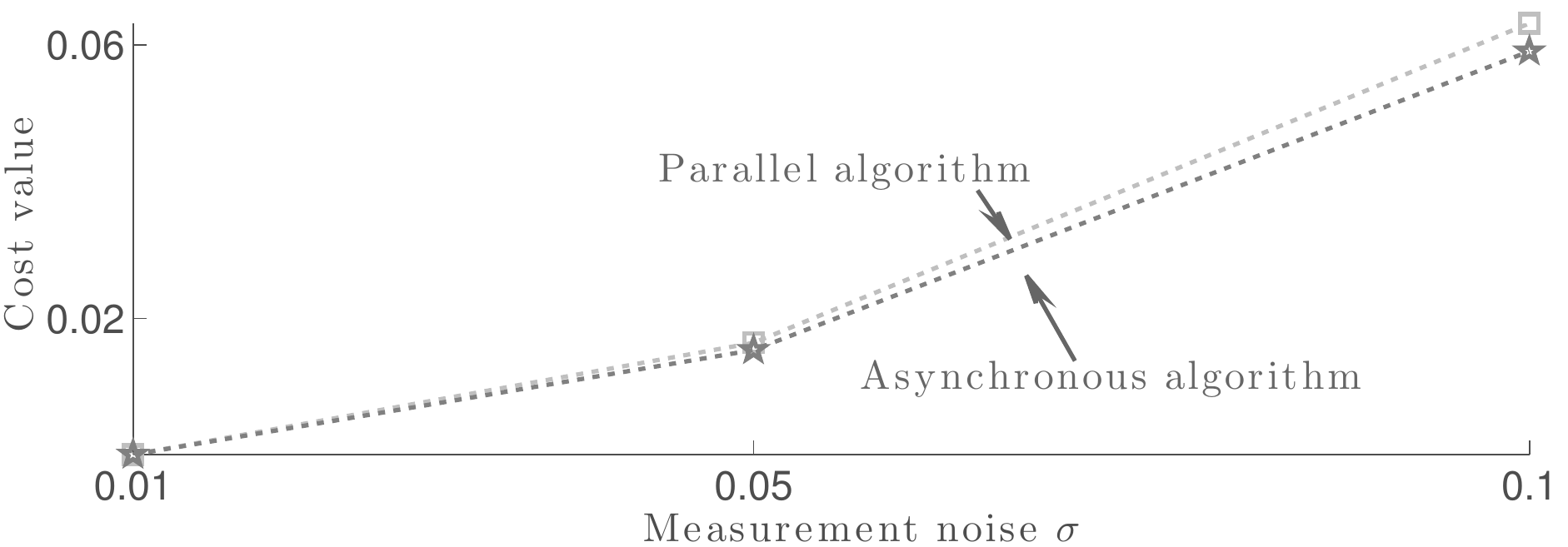}
  \caption{Final cost of the parallel Algorithm~\ref{alg:synDR} and
    its asynchronous counterpart in Algorithm~\ref{alg:asynDR}
    {with an exact update} for the same number of
    communications. Results for the asynchronous version degrade less
    than those of the parallel one as the noise level increases. The
    stochastic Gauss-Seidel iterations prove to be more robust to
    intense noise.}
  \label{fig:parallelvsAsynCvxCost}
\end{figure}
In Figure~\ref{fig:parallelvsAsynCvxCost} we present the effectiveness
of both algorithms in optimizing the disk relaxation cost
in~\eqref{eq:cvx-prob-1}, with the same amount of communications. We
chose the uniform probability law for the random variables~$\xi_{k}$
representing the sequence of updating nodes in the asynchronous
version of our method. Again, we ran $50$ Monte Carlo trials, each
with $3$ noise levels, thus leading to $150$ samplings of the noise
variables in~\eqref{eq:noise}. %The experiment revealed the robustness
%of the asynchronous version to increasing noise levels. This behavior
%was expected from the known properties of stochastic Gauss-Seidel
%iterations.

% \begin{figure}[h]
%   \centering
%   \includegraphics[width=\columnwidth]{}
%   \caption{}
%   \label{fig:parallelvsAsynRMSE}
% \end{figure}
}

\section{Concluding remarks}
\label{sec:concluding-remarks}

Experiments in Section~\ref{sec:experimental-results} show that our
method is superior to the state of the art in all measured
indicators. While the comparison with the projection method published
in~\cite{GholamiTetruashviliStromCensor2013} is favorable to our
proposal, it should be further considered that the projection method
has a different nature when compared to ours: it is sequential, and
such algorithms will always have a larger computation time than
parallel ones, since nodes run in sequence; moreover, this
computation time grows with the number of sensors while parallel
methods retain similar speed, no matter how many sensors the
network has.

When comparing with a distributed and parallel method similar to
Algorithm~\ref{alg:synDR}, like the ESDP method
in~\cite{SimonettoLeus2014} we can see one order of magnitude
improvement in RMSE for one order of magnitude fewer communications of
our method --- and this score is achieved with a simpler,
easy-to-implement algorithm, performing simple computations at each
node that are well suited to the kind of hardware commonly found in
sensor networks.

There are some important questions not addressed here. For example, it
is not clear what influence the number of anchors and their spatial
distribution can have in the performance of the proposed and state of
the art algorithms. Also, an exhaustive study on the impact of varying
topologies and number of sensors could lead to interesting
results. {Some preliminary experiments show that all
  convex relaxations experience some performance degradation when
  tested for robustness to sensors outside the convex hull of the
  anchors. This issue has been noted by several authors, but a more
  exhaustive study exceeds the scope of this paper.}

But with the data presented here one can already grasp the advantages of
our fast and easily implementable distributed method, where the
optimality gap of the solution can also be easily quantified, and which
offers two implementation flavours for different localization needs.

\section*{Acknowledgements}
\label{sec:acknowledgements}
The authors would like to thank Pinar Oguz-Ekim and Andrea Simonetto
for providing the Matlab implementation of the methods in their
papers. We also thank the reviewers for the very interesting questions
raised, and productive recommendations.

\appendices

\section{Convex envelope} \label{sec-cvxenv}

{
We show that the function in~\eqref{eq:ball-sq-dist} is the convex envelope of the function in~\eqref{eq:sphere-sq-dist}.
 Refer to $\alpha$ as the function in~\eqref{eq:sphere-sq-dist} and $\beta$ as the function in~\eqref{eq:ball-sq-dist}. 
We show that $\alpha^{\star \star} = \beta$ where $f^\star$ denotes the Fenchel conjugate of a function~$f$, cf.~\cite[Cor. 1.3.6, p. 45, v. 2]{UrrutyMarechal1993}.

 We start by computing $\alpha^\star$: \begin{eqnarray*}
\alpha^\star(s) & = & \sup_{z}\, s^\top z - \alpha(z) \\ & = & \sup_z\, s^\top z - \left( \frac{1}{2} \inf_{ \left\| y \right\| = d_{ij}} \left\| z - y \right\|^2 \right)
\\ & = & \sup_z \sup_{\left\| y \right\| = d_{ij}}\, s^\top z - \frac{1}{2} \left\| z - y \right\|^2 \\ & = & \sup_{\left\| y \right\| = d_{ij}} \sup_z\, 
s^\top z - \frac{1}{2} \left\| z - y \right\|^2 \\ & = & \sup_{\left\| y \right\| = d_{ij}}\,\frac{1}{2} \left\| s \right\|^2 + s^\top y \\ & = & \frac{1}{2} \left\| s \right\|^2 + d_{ij} \left\| s \right\|.
\end{eqnarray*}
Thus, $\alpha^\star$ is the sum of two closed convex functions: $\alpha^\star = g + h$ where $g(s) = \frac{1}{2} \left\| s \right\|^2$ and $h(s) = d_{ij}\left\| s 
\right\|$. Note that $h(s) = \sigma_{\text{B}(0,d_{ij})}(s)$ where $\sigma_C(s) = \sup\{ s^\top x\,:\, x \in C \}$ denotes the support function of a set $C$. Thus, using \cite[Th. 2.3.1, p. 61, v. 2]{UrrutyMarechal1993}, we have 
$$\alpha^{\star \star}(z) = \inf_{z_1 + z_2 = z}\,g^\star(z_1) + h^\star(z_2).$$ 
Since $g^\star(z_1) = \frac{1}{2} \left\| z_1 \right\|^2$ \cite[Ex. 1.1.3, p. 38, v. 2]{UrrutyMarechal1993}
 and $h^\star(z_2) = i_{\text{B}_{ij}}(z_2)$ \cite[Ex. 1.1.5, p. 39, v. 2]{UrrutyMarechal1993} where $i_C(x) = 0$ if $x \in C$ and $i_C(x) = +\infty$ if $x \not\in C$ denotes the indicator of a set~$C$, we conclude that \begin{eqnarray*}
\alpha^{\star \star}(z) & = & \inf_{z_1 + z_2 = z}\, \frac{1}{2} \left\| z_1 \right\|^2 + i_{\text{B}_{ij}}(z_2) \\ & = & \inf_{z_2 \in \text{B}_{ij}}\,\frac{1}{2} \left\| z - z_2 \right\|^2 \\ & = & \beta(z).
\end{eqnarray*}
}

\section{Lipschitz constant of $\nabla \phi_{\text{B}_ij}$}
\label{sec-lips}

{ 
We prove the inequality in~\eqref{eq:sq-dist-lipschitz-grad}:  \begin{equation} \left\| \nabla \phi_{\text{B}_{ij}} (x) - \nabla \phi_{\text{B}_{ij}} (y) \right\| \leq \left\| x - y \right\| \label{kf} \end{equation} 
where $\nabla \phi_{\text{B}_{ij}}(z) = z - \text{P}_{\text{B}_{ij}}(z)$ and $\text{P}_{\text{B}_{ij}}(z)$ is the projector onto  $\text{B}_{ij} = 
\left\{ z \in {\mathbb R}^p\,:\,\left\| z \right\| \leq d_{ij} \right\}$.  Squaring both sides of~\eqref{kf} gives the equivalent inequality 
\begin{equation}
2  ( \text{P}(x) - \text{P}(y) )^\top ( x  - y )- \left\| \text{P}(x) - \text{P}(y) \right\|^2 \geq 0
\label{kf2}
\end{equation}
where, to simplify notation, we let $\text{P}(z) := \text{P}_{\text{B}_{ij}}(z)$.
Inequality~\eqref{kf2} can be rewritten as
\begin{IEEEeqnarray}{rCl}
( \text{P}(x) - \text{P}(y) )^\top ( x - y ) + \left( \text{P}(x) - \text{P}(y) \right)^\top ( \text{P}(y) - y ) \nonumber \\ + ( \text{P}(x) - \text{P}(y) )^\top ( x - \text{P}(x) ) \geq 0.
\label{kf3}
\end{IEEEeqnarray}
By the properties of projectors onto closed convex sets, $( z - \text{P}(z) )^\top ( w - \text{P}(z) ) \leq 0$, for any $w \in \text{B}_{ij}$ and any $z$, cf.~\cite[Th. 3.1.1, p. 117, v. 1]{UrrutyMarechal1993}. Thus, the last two terms on the left-hand side of~\eqref{kf3} are nonnegative. Moreover, the first term is nonnegative due to~\cite[Prop. 3.1.3, p. 118, v. 1]{UrrutyMarechal1993}. Inequality~\eqref{kf3} is proved.}

\section{Auxiliary Lemmas}
\label{sec:auxil-lemmas}

In this Section we establish basic properties of
Problem~\eqref{eq:cvx-prob-2} in Lemma~\ref{lem:basic-properties} and
also two technical Lemmas, instrumental to prove our convergence
results in Theorem~\ref{th:asy-convergence}.

\begin{lemma}[Basic properties]
  \label{lem:basic-properties}
  Let $\hat f$ as defined in~\eqref{eq:cvx-prob-1}. Then the following
  properties hold.
  \begin{enumerate}
  \item $\hat f$ is coercive;
  \item $\hat f^{\star} \geq 0$ and $\mathcal{X}^{\star} \neq
    \varnothing$;
  \item $\mathcal{X}^{\star}$ is compact;
  \end{enumerate}
\end{lemma}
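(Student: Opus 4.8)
The plan is to prove coercivity first, since properties (2) and (3) then follow almost mechanically. That $\hat f^\star \geq 0$ is immediate, because $\hat f$ is a sum of squared distances and hence nonnegative everywhere. The genuine work lies in item (1), and the crux is that each building block $\tfrac12 \mathrm{d}^2_{\mathrm{B}_{ij}}$ and $\tfrac12 \mathrm{d}^2_{\mathrm{B_{a}}_{ik}}$ is \emph{flat} on the interior of its ball: none of these terms is coercive on its own, and the naive comparison of $\mathrm{d}^2_{\mathrm{B}_{ij}}(x_i-x_j)$ with $\|x_i-x_j\|^2$ breaks down inside the ball. So a direct termwise argument cannot work, and one must exploit the graph structure jointly with the anchor.

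For coercivity I would first record the closed form $\mathrm{d}^2_{\mathrm{B}_{ij}}(z) = (\|z\|-d_{ij})_+^2$ and establish, for every $\epsilon\in(0,1)$ and every $d\geq 0$, the elementary quadratic minorant
\[
(\|z\|-d)_+^2 \;\geq\; (1-\epsilon)\|z\|^2 - \tfrac{1-\epsilon}{\epsilon}\,d^2,
\]
valid for all $z$ (checked separately inside the ball, where the right side is $\leq 0$, and outside, where the difference collapses to the perfect square $\epsilon(\|z\|-d/\epsilon)^2\geq 0$). Applying this edge by edge and anchor by anchor and summing yields
\[
\hat f(x) \;\geq\; \tfrac{1-\epsilon}{2}\,Q(x) - C, \qquad Q(x) := \|Ax\|^2 + \sum_i \sum_{k\in\mathcal{A}_i}\|x_i-a_k\|^2,
\]
with $C$ finite. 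It then suffices to show the quadratic $Q$ is coercive.

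The coercivity of $Q$ reduces to positive definiteness of its Hessian $\mathcal{L}+D$, where $\mathcal{L}=A^\top A = L\otimes I_p$ and $D=\diag(|\mathcal{A}_i|\,I_p)$. This is the main obstacle, and it is exactly where both halves of Assumption~\ref{th:connected-assumption} are needed. I would argue in two stages: $x^\top\mathcal{L}x = 0$ forces $x_i=x_j$ across every edge, which by \emph{connectivity} makes all blocks equal to a common $c\in\reals^p$; and $x^\top D x = 0$ forces $x_i=0$ at any node carrying an attached anchor, which exists by the \emph{anchor} hypothesis. Together these give $c=0$, hence $x=0$, so $\mathcal{L}+D\succ 0$ and $Q$ — and therefore $\hat f$ — is coercive.

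Finally I would dispatch (2) and (3) by the standard coercivity argument. Since $\hat f$ is continuous, every sublevel set is closed, and coercivity makes each one bounded, hence compact; Weierstrass then shows the infimum $\hat f^\star$ is attained, so $\mathcal{X}^\star\neq\varnothing$ (and $\hat f^\star\geq 0$ from nonnegativity). Taking the sublevel set at level $\hat f^\star$ recovers $\mathcal{X}^\star$ itself, which is thus closed and bounded, i.e.\ compact, giving (3).
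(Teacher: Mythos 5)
Your proof is correct, and for the only nontrivial item --- coercivity --- it takes a genuinely different route from the paper. The paper argues directly on divergent sequences: using Assumption~\ref{th:connected-assumption} it follows the path from a node $i$ with $\|x_i\|\to\infty$ to an anchor-bearing node $j$ and splits into two cases (either some edge $t\sim u$ on the path has $\|x_t\|\to\infty$ while $\|x_u\|\not\to\infty$, so the edge term $\mathrm{d}^2_{\mathrm{B}_{tu}}(x_t-x_u)$ blows up, or every node on the path diverges, so the anchor term $\mathrm{d}^2_{\mathrm{B_a}_{jk}}(x_j)$ blows up). You instead build a global quadratic minorant: your inequality $\left(\max\{0,\|z\|-d\}\right)^2\geq(1-\epsilon)\|z\|^2-\frac{1-\epsilon}{\epsilon}d^2$ checks out in both regimes (the outside case reduces to the perfect square $\epsilon(\|z\|-d/\epsilon)^2$), and summing gives $\hat f\geq\frac{1-\epsilon}{2}Q-C$ with $Q$ coercive because $\mathcal{L}+D\succ 0$, which is exactly where connectivity and the existence of an anchored node enter. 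Both arguments are sound and both use the full strength of Assumption~\ref{th:connected-assumption}; the paper's is shorter and more elementary, while yours is more quantitative --- it exhibits an explicit coercive quadratic lower bound (hence at-least-quadratic growth of $\hat f$), it sidesteps the subsequence bookkeeping implicit in the paper's ``two cases'' dichotomy, and it ties coercivity to the spectral object $\mathcal{L}=L\otimes I_p$ already used in Section~\ref{sec:grad-lipsch-const}. Your handling of items (2) and (3) via compact sublevel sets and Weierstrass coincides with the paper's.
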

\begin{proof}
  \begin{enumerate}[leftmargin=*,noitemsep,topsep=0pt,parsep=0pt,partopsep=0pt]
  \item By Assumption~\ref{th:connected-assumption} there is a path
    from each node~$i$ to some node~$j$ which is connected to an
    anchor~$k$. If $\|x_{i}\| \to \infty$ then there are two cases:
    (1) there is at least one edge $t \sim u$ along the path from~$i$
    to~$j$ where~$\|x_{t}\| \to \infty$ and~$\|x_{u}\|\not \to
    \infty$, and so $\mathrm{d}^{2}_{\mathrm{B}_{tu}}(x_{t}-x_{u}) \to
    \infty$; (2) if $\|x_{u}\| \to \infty$ for all~$u$ in the path
    between~$i$ and~$j$, in particular we have~$\|x_{j}\| \to \infty$
    and so~$\mathrm{d}^{2}_{\mathrm{B_{a}}_{jk}}(x_{j}) \to \infty$,
    and in both cases~$\hat f \to \infty$, thus,~$\hat f$ is coercive.
    
  \item Function~$\hat f$ defined in~\eqref{eq:cvx-prob-1} is a sum of
    squares, it is continuous, convex and a real valued function,
    lower bounded by zero; so, the infimum~$\hat f^{\star}$ exists and
    is non-negative. To prove this infimum is attained
    and~$\mathcal{X}^{\star} \neq \varnothing$, we consider the set~$T
    = \{x : \hat f(x) \leq \alpha \}$; $T$ is a sublevel set of a
    continuous, coercive function and, thus, it is compact.  As~$\hat
    f$ is continuous, by the Weierstrass Theorem, the value~$p =
    \inf_{x \in T} \hat f(x)$ is attained; the equality~$\hat
    f^{\star} = p$ is evident.

  \item $\mathcal{X}^{\star}$ is a sublevel set of a continuous coercive
    function and, thus, compact. \qedhere
  \end{enumerate}
\end{proof}

\begin{lemma}
  \label{lem:vanish-grad}
  Let~$\{x(k)\}_{k \in \naturals}$ be the sequence of iterates of
  Algorithm~\ref{alg:asynDR}, or of Algorithm~\ref{alg:asynDR} with
  the update~\eqref{eq:grad-step-update}, and~$\nabla \hat f\left(
    x(k) \right)$ be the gradient of function~$\hat f$ evaluated at
  each iterate. Then,
  \begin{enumerate}
  \item $ \displaystyle
      \sum_{k \geq 1} \| \nabla \hat f\left( x(k) \right)\|^{2} <
      \infty, \; a.s.;$
    \item $\displaystyle \nabla \hat f\left( x(k) \right) \to 0, \;
      a.s.$ 
  \end{enumerate}
\end{lemma}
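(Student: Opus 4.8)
The plan is to show that both updates appearing in Algorithm~\ref{alg:asynDR}---the exact block minimization of Step~\ref{alg:asynmin} and the single gradient step~\eqref{eq:grad-step-update}---yield a guaranteed per-iteration decrease of $\hat f$ that is controlled from below by the activated block gradient, and then to sum these decreases. First I would fix an iteration $k$ and condition on the event $\{\xi_k = i\}$, so that only block $i$ is updated while all other blocks stay frozen; then $\hat f(x(k)) = \psi(x_i(k))$ where $\psi(w_i) := \hat f(x_1(k-1),\dots,w_i,\dots,x_n(k-1))$. Since $\nabla \hat f$ is $L_{\hat f}$-Lipschitz, the partial gradient $\nabla_i \hat f$ is Lipschitz in $x_i$ with constant at most $L_{\hat f}$, so $\psi$ is $L_{\hat f}$-smooth and the step size $1/L_{\hat f}$ is admissible for every block. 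The descent lemma for a gradient step of size $1/L_{\hat f}$ then gives
\[
\hat f(x(k)) \leq \hat f(x(k-1)) - \frac{1}{2 L_{\hat f}} \| \nabla_i \hat f(x(k-1)) \|^2 .
\]
For the exact update the same bound holds a fortiori, because block minimization (well defined, as~\eqref{eq:sl-problem} is coercive and continuous) can only do at least as well as the gradient step it dominates. This reduces both cases to one inequality.

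Next I would take the conditional expectation over $\xi_k$ given the history $\mathcal{F}_{k-1} = \sigma(\xi_1,\dots,\xi_{k-1})$. Using $\prob(\xi_k=i)=P_i>0$ from~\eqref{eq:rv} (with $P_i = 1/n$ in the gradient-update case~\eqref{eq:prob-uniform}) together with the block decomposition $\sum_i \|\nabla_i \hat f\|^2 = \|\nabla \hat f\|^2$,
\[
\expect\!\left[\hat f(x(k)) \mid \mathcal{F}_{k-1}\right] \leq \hat f(x(k-1)) - \frac{P_{\mathrm{min}}}{2 L_{\hat f}} \| \nabla \hat f(x(k-1)) \|^2 ,
\]
where $P_{\mathrm{min}} = \min_i P_i > 0$. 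This is a supermartingale-type recursion for the nonnegative process $\hat f(x(k)) - \hat f^\star$, whose nonnegativity and the existence of $\hat f^\star$ are guaranteed by Lemma~\ref{lem:basic-properties}.

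To obtain statement~(1) I would take full expectations and telescope from $k=1$ to $K$, using $\expect[\hat f(x(K))] \geq \hat f^\star$, which gives
\[
\frac{P_{\mathrm{min}}}{2 L_{\hat f}} \sum_{k \geq 0} \expect\!\left[\|\nabla \hat f(x(k))\|^2\right] \leq \hat f(x(0)) - \hat f^\star < \infty .
\]
By the monotone convergence theorem this bounds $\expect\!\big[\sum_{k} \|\nabla \hat f(x(k))\|^2\big]$, so the sum is finite almost surely, which is statement~(1); the supermartingale convergence theorem of Robbins--Siegmund would yield the same conclusion. Statement~(2) is then immediate, since the general term of an almost surely convergent series tends to zero, whence $\nabla \hat f(x(k)) \to 0$ a.s.

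I do not expect a serious obstacle here, as this is a standard randomized block-coordinate descent estimate; the only points requiring care are the unifying descent inequality and the expectation step. Concretely, I must confirm that the global constant $L_{\hat f}$ dominates each block-Lipschitz constant---so that the fixed step $1/L_{\hat f}$ is valid for whichever block is drawn---and that exact block minimization inherits the gradient-step guarantee. The positivity of the activation probabilities, already assumed in~\eqref{eq:rv}, is exactly what makes $P_{\mathrm{min}}>0$ and closes the passage from the block gradient to the full gradient $\|\nabla \hat f\|^2$.
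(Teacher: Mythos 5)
Your proposal is correct and follows essentially the same route as the paper: a block descent inequality with the global constant $L_{\hat f}$ (with exact block minimization dominating the gradient step, exactly as in the paper's chain $\phi^{\star} \leq \phi(z - \tfrac{1}{L_{\hat f}}\nabla\phi(z)) \leq \phi(z) - \tfrac{1}{2L_{\hat f}}\|\nabla\phi(z)\|^{2}$), followed by a conditional expectation over the activated block giving an expected decrease of $\tfrac{P_{\mathrm{min}}}{2L_{\hat f}}\|\nabla\hat f(x(k-1))\|^{2}$, and then summability. The only cosmetic difference is in the last step: the paper forms the nonnegative supermartingale $Y_{k} = \hat f(x(k)) + \tfrac{P_{\mathrm{min}}}{2L_{\hat f}}\sum_{i \leq k-1}\|\nabla\hat f(x(i))\|^{2}$ and cites the supermartingale convergence theorem, whereas you telescope full expectations and use monotone convergence---both standard and equivalent here.
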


\begin{proof}
  Let~$\mathcal{F}_{k} = \sigma \left(x(0), \cdots, x(k) \right)$ be
  the sigma-algebra generated by all the algorithm iterations until
  time~$k$. We are interested in~$\expect\left[ \hat
    f\left(x(k)\right)| \mathcal{F}_{k-1}\right]$, the expected value
  of the cost value of the~$k$th iteration, given the knowledge of the
  past~$k-1$ iterations.  Firstly, let us examine function~$\phi :
  \reals^{p} \to \reals$, the slice of~$\hat f$ along a coordinate
  direction,~$\phi(y) = \hat f( x_1, \ldots, x_{i-1}, y, x_{i+1}, \ldots, x_n)$.
 As~$\hat f$ has
  Lipschitz continuous gradient with constant~$L_{\hat f}$, so will~$\phi$:
$
    \|\nabla \phi(y) - \nabla \phi(z) \| \leq L_{\hat f} \|y-z\|,
 $
  for all~$y$ and~$z$, and, thus, it will inherit the
  property  \begin{equation}
    \label{eq:quadratic-majorizer}
    \phi(y) \leq \phi(z) + \left< \nabla \phi (z), y-z \right> + \frac{L_{\hat f}}{2} \|y-z\|^{2}. 
  \end{equation}
{  Inequality~\eqref{eq:quadratic-majorizer} is known as the Descent Lemma~\cite[Prop. A.24]{bertsekas1999nonlinear}.}
  The minimizer of the quadratic upper-bound
  in~\eqref{eq:quadratic-majorizer} is
 % \begin{equation*}
    %\label{eq:min-quadratic-maj}
    %z^{\star} = 
    $z - \frac{1}{L_{\hat f}} \nabla\phi(z)$,
%  \end{equation*}
  which can be plugged back in~\eqref{eq:quadratic-majorizer},
  obtaining
  \begin{equation}
    \label{eq:bound-to-optimal-slice}
    \phi^{\star} \leq \phi\left(z-\frac1{L_{\hat f}} \nabla\phi(z)\right) \leq
    \phi(z) - \frac 1{2 L_{\hat f}}\|\nabla\phi(z)\|^{2}.
  \end{equation}
  In the sequel, for a given $x = ( x_1, \ldots, x_n )$, we let $$\hat
  f_i^\star(x_{-i}) = \inf\{ \hat f(x_1, \ldots, x_{i-1}, z, x_{i+1},
  \ldots, x_n )\,:\, z \}.$$ Going back to the
  expectation~$\expect\left[ \hat
    f\left(x(k)\right)|\mathcal{F}_{k-1}\right] = \sum_{i=1}^{n}P_{i}
  \hat f_{i}^{\star}\left(x_{-i}(k-1)\right)$, we can bound it from
  above, recurring to~\eqref{eq:bound-to-optimal-slice}, by
  \begin{align}
    \nonumber &\sum_{i=1}^{n} P_{i}\left( \hat
      f(x(k-1)) -
      \frac1{2 L_{\hat f}}\|\nabla_{i}\hat{f}(x(k-1))\|^{2} \right) \\
    \nonumber & = \hat f(x(k-1)) - \frac1{2 L_{\hat f}} \sum_{i=1}^{n}
    P_{i}\|\nabla_{i}\hat{f}(x(k-1))\|^{2}\\
    \label{eq:expectation-k-cost} 
    &\stackrel{(a)}{\leq} \hat f(x(k-1)) - \frac
    {P_{\mathrm{min}}}{2 L_{\hat f}}\|\nabla\hat f(x(k-1))\|^{2},
  \end{align}
  where we used~$0 < P_{\mathrm{min}} \leq P_{i}$, for all~$i \in
  \{1,\cdots,n\}$ in~$(a)$. To alleviate notation, let~$g(k) = \nabla
  \hat f(x(k))$; we then have
  \begin{equation*}
    \label{eq:extended-g}
    \|g(k)\|^{2} = \sum_{i \leq k} \|g(i)\|^{2} - \sum_{i \leq k-1}\|g(i)\|^{2},
  \end{equation*}
  and adding~$\frac {P_{\mathrm{min}}}{2L}\sum_{i \leq
    k-1}\|g(i)\|^{2}$ to both sides of the inequality
  in~\eqref{eq:expectation-k-cost}, we find that
  \begin{equation}
    \label{eq:supermartingale}
    \expect\left[ Y_{k} | \mathcal{F}_{k-1}\right] \leq Y_{k-1},
  \end{equation}
  where $Y_{k} = \hat f(x(k)) + \frac{P_{\mathrm{min}}}{2L}
  \sum_{i\leq k-1} \|g(i)\|^{2}$.
  Inequality~\eqref{eq:supermartingale} defines the
  sequence~$\left\{Y_{k}\right\}_{k \in \naturals}$ as a
  supermartingale. As~$\hat f(x)$ is always non-negative, then~$Y_{k}$
  is also non-negative and so~\cite[Corollary 27.1]{JacodProtter01},
  \begin{equation*}
    \label{eq:convergence-a-s}
    Y_{k} \to Y, \; a.s.
  \end{equation*}
  In words, the sequence~$Y_{k}$ converges almost surely to an
  integrable random variable~$Y$. This entails that $ \sum_{k \geq 1}
  \|g(k)\|^{2} < \infty, \; a.s., $ and so, $ g(k) \to 0, \;
  a.s.\qedhere $ { The previous arguments show that
    Lemma~\ref{lem:vanish-grad} holds for
    Algorithm~\ref{alg:asynDR}. To show that
    Lemma~\ref{lem:vanish-grad} also holds for
    Algorithm~\ref{alg:asynDR} with the
    update~\eqref{eq:grad-step-update} it suffices to redefine $$\hat
    f_i^\star (x_{-i}) := \hat f\left( x_1, \ldots, x_i -
      \frac{1}{L_{\hat f}} \nabla_i \hat f( x ), \ldots, x_n
    \right).$$ As the second inequality
    in~\eqref{eq:bound-to-optimal-slice} shows, we have the bound
$$ \hat f_i^\star (x_{-i}(k-1)) \leq \hat f( x(k-1) ) -  \frac{1}{L_{\hat f}} \left\| \nabla_i \hat f\left( x(k-1) \right) \right\|^2$$
and the rest of the proof holds intact.
 }
\end{proof}

\begin{lemma}
  \label{lem:cost-value-convergence}
  Let $\{ x(k) \}_{k \in \naturals}$ be one of the sequences generated with probability one according to Lemma~\ref{lem:vanish-grad}.
  %If~$\hat f$ is coercive, continuous, and convex, $\{x(k)\}_{k \in
   % \naturals}$ is a sequence in the same conditions of Lemma~\ref{lem:vanish-grad}, and~$\nabla \hat f(x(k)) \to
  %0$, 
  Then,
  \begin{enumerate}
  \item The function value decreases to the optimum: $\hat f (x(k))
    \downarrow \hat f^{\star};$
  \item There exists a subsequence of~$\{x(k)\}_{k \in \naturals}$ converging
    to a point in~$\mathcal{X}^{\star}$: $x(k_{l}) \to y, \; y \in
    \mathcal{X}^{\star}$.
  \end{enumerate}
\end{lemma}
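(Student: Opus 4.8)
The plan is to prove both claims by combining the descent property of the node updates with the coercivity and convexity of $\hat f$. First I would establish that the sequence $\hat f(x(k))$ is monotonically non-increasing. At iteration $k$ only the active node $i=\xi_k$ changes its variable, so the change in cost is governed entirely by the one-dimensional slice $\phi(y) = \hat f(x_1(k-1),\ldots,y,\ldots,x_n(k-1))$. For the exact update, $x_i(k)$ minimizes $\phi$, so $\hat f(x(k)) = \min_y \phi(y) \leq \phi(x_i(k-1)) = \hat f(x(k-1))$; for the inexact gradient update, the second inequality in~\eqref{eq:bound-to-optimal-slice} gives $\hat f(x(k)) \leq \hat f(x(k-1)) - \frac{1}{2L_{\hat f}}\|\nabla_i \hat f(x(k-1))\|^2 \leq \hat f(x(k-1))$. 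In both cases $\hat f(x(k))$ is non-increasing, and since $\hat f^\star \geq 0$ by Lemma~\ref{lem:basic-properties} it is bounded below, hence converges to some limit $\ell \geq \hat f^\star$.

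Next I would extract a convergent subsequence. Because $\hat f$ is coercive (Lemma~\ref{lem:basic-properties}) and the iterates satisfy $\hat f(x(k)) \leq \hat f(x(0))$ by the monotonicity just shown, the whole sequence lies in the sublevel set $\{x : \hat f(x) \leq \hat f(x(0))\}$, which is compact. By Bolzano--Weierstrass there is a subsequence $x(k_l) \to y$ for some $y$.

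Then I would identify $y$ as a minimizer. Since we work on a sample path where Lemma~\ref{lem:vanish-grad} holds, $\nabla \hat f(x(k)) \to 0$; as $\nabla \hat f$ is continuous (indeed Lipschitz with constant $L_{\hat f}$), passing to the limit along the subsequence yields $\nabla \hat f(y) = \lim_l \nabla \hat f(x(k_l)) = 0$. Convexity of $\hat f$ then forces $y$ to be a global minimizer, i.e.\ $y \in \mathcal{X}^\star$ with $\hat f(y) = \hat f^\star$, which establishes the second claim.

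Finally, I would close the loop on the first claim. By continuity of $\hat f$ we have $\hat f(x(k_l)) \to \hat f(y) = \hat f^\star$; but the full sequence $\hat f(x(k))$ already converges to $\ell$, so every subsequence converges to $\ell$, forcing $\ell = \hat f^\star$. Together with the monotonicity this gives $\hat f(x(k)) \downarrow \hat f^\star$. I do not anticipate a serious obstacle: this lemma is essentially routine, and the only points requiring care are handling the exact and inexact updates uniformly in the monotonicity step, and reconciling the subsequential limit of the cost with the monotone limit $\ell$ of the full sequence.
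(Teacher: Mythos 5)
Your proof is correct and follows essentially the same route as the paper: monotone descent of $\hat f(x(k))$ places all iterates in the compact sublevel set $\{x : \hat f(x)\leq \hat f(x(0))\}$, Bolzano--Weierstrass yields a convergent subsequence, and the vanishing gradient from Lemma~\ref{lem:vanish-grad} together with convexity identifies the limit as a member of $\mathcal{X}^{\star}$. You are in fact slightly more complete than the paper, which asserts the monotonicity without distinguishing the exact and inexact updates and leaves implicit the final step reconciling the monotone limit with $\hat f^{\star}$.
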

\begin{proof}
  As~$\hat f$ is coercive, then the sublevel set~$\mathcal{X}_{\hat f}
  = \left\{x : \hat f(x) \leq \hat f(x(0)) \right\}$ is compact and,
  because~$\hat f (x(k))$ is non increasing, all elements
  of~$\{x(k)\}_{k \in \naturals}$ belong to this set. From the
  compactness of~$\mathcal{X}_{\hat f}$ we have that there is a
  convergent subsequence~$x(k_{l}) \to y$. We evaluate the gradient at
  this accumulation point,~$\nabla \hat f(y) = \lim_{l \to \infty}
  \nabla \hat f(x(k_{l}))$, which, by assumption, vanishes, and we
  therefore conclude that~$y$ belongs to the solution
  set~$\mathcal{X}^{\star}$. Moreover, the function value at this
  point is, by definition, the optimal value.
\end{proof}

\section{Proofs of Theorems in Section~\ref{sec:convergence-analysis}}
\label{sec:theorems}
Equipped with the previous lemmas, we are now ready to prove the
Theorems stated in Section~\ref{sec:convergence-analysis}.

\begin{proof}[Proof of Theorem~\ref{th:asy-convergence}]
  Suppose the distance does not converge to zero. Then, there exists
  an~$\epsilon > 0$ and some subsequence~$\{x(k_{l})\}_{l \in \naturals}$ such
  that~$\mathrm{d}_{\mathcal{X}^{\star}}(x(k_{l})) > \epsilon$. But,
  as~$\hat f$ is coercive (by Lemma~\ref{lem:basic-properties}),
  continuous, and convex, and whose gradient, by
  Lemma~\ref{lem:vanish-grad}, vanishes, then by
  Lemma~\ref{lem:cost-value-convergence}, there is a subsequence
  of~$\{x(k_{l})\}_{l \in \naturals}$ converging to a point
  in~$\mathcal{X}^{\star}$, which is a contradiction.
\end{proof}

\begin{proof}[Proof of Theorem~\ref{th:asy-convergence-GD}] Fix an
arbitrary point $x^\star \in {\mathcal X}^\star$. We start by proving
that the sequence of squared distances to~$x^{\star}$ of the estimate
produced by Algorithm~\ref{alg:asynDR}, with the update defined in
Equation~\eqref{eq:grad-step-update}, converges almost surely; that
is, the sequence $\{ \left\| x(k) - x^\star \right\|^2 \}_{k \in
\naturals}$ is convergent with probability one. We have
  \begin{eqnarray}
    \label{eq:expect-dist-gd} \lefteqn{\expect\left[\|x(k) -
x^{\star}\|^{2} | \mathcal{F}_{k-1} \right] =} \\ & & \sum_{i = 1}^n
\frac{1}{n} \left\| x(k-1) - \frac{1}{L_{\hat f}} g_i (k-1) - x^\star
\right\|^2 \nonumber
\end{eqnarray} where $g_i(k-1) = ( 0, \ldots, 0, \nabla_i \hat f(
x(k-1) ), 0, \ldots , 0)$ and ~$\mathcal{F}_{k} = \sigma\left(x(1),
  \ldots, x(k)\right)$ is the sigma-algebra generated by all iterates
until time~$k$. Expanding the right-hand side
of~\eqref{eq:expect-dist-gd} yields
  \begin{eqnarray*} \lefteqn{\left\| x(k-1) - x^\star \right\|^2 +
\frac{1}{ n L_{\hat f}^2 } \left\| \nabla \hat f ( x(k-1) )
\right\|^2} \\ & & - \frac{2}{n L_{\hat f}} ( x(k-1) - x^\star )^\top
\nabla \hat f( x(k-1) ).
  \end{eqnarray*} Since $( x(k-1) - x^\star )^\top \nabla \hat f(
x(k-1) ) = ( x(k-1) - x^\star )^\top \left( \nabla \hat f( x(k-1) ) -
\nabla f ( x^\star ) \right) \geq 0$, we conclude
that \begin{eqnarray*} \lefteqn{\expect\left[\|x(k) - x^{\star}\|^{2}
| \mathcal{F}_{k-1} \right]} \\ & \leq & \left\| x(k-1) - x^\star
\right\|^2 + \frac{1}{ n L_{\hat f}^2 } \left\| \nabla \hat f ( x(k-1)
) \right\|^2.
  \end{eqnarray*} Now, as proved in Lemma~\ref{lem:vanish-grad}, the
sum~$\sum_{k} \|\nabla \hat f(x(k))\|^{2}$ converges almost
surely. Thus, invoking the result in~\cite{RobbinsSiegmund1985}, we
get that~$\|x(k) - x^{\star}\|^{2}$ converges almost surely.

{ We can now invoke the technique at the end of the proof
  of~\cite[Prop.~9]{Bertsekas2011} to conclude that $x(k)$
  converges to some optimal point $x^\star$.}

\end{proof}

\bibliographystyle{IEEEtran} \bibliography{IEEEabrv,biblos}

\begin{IEEEbiography}[{\includegraphics[width=1in,height
=1.25in,clip,keepaspectratio]{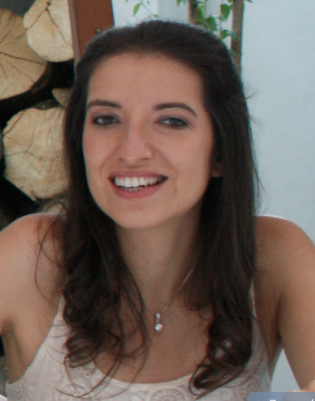}}]{Cl\'{a}udia Soares}
(S'10) received the M.S.\ degree in electrical and computer engineering from Instituto
Superior Tecnico (IST), Lisbon, Portugal in 2007. 
She is currently working toward the Ph.D. degree in
electrical and computer engineering
in the Signal and
Image Processing Group of the Institute for Systems and Robotics, IST.
Her research interests include distributed optimization and sensor networks.
\end{IEEEbiography}

%%% Local Variables:
%%% mode: latex
%%% TeX-master: "ieeemanuscript"
%%% End:

\begin{IEEEbiography}[{\includegraphics[width=1in,height
=1.25in,clip,keepaspectratio]{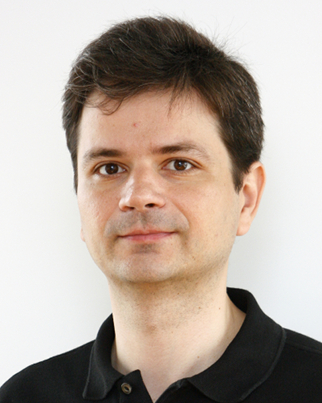}}]{Jo\~{a}o Xavier}
(S'97 - M'03) received the Ph.D. degree in Electrical and
Computer Engineering from Instituto Superior Tecnico (IST), Lisbon,
Portugal, in 2002. Currently, he is an Assistant Professor in the
Department of Electrical and Computer Engineering, IST. He is also a
Researcher at the Institute of Systems and Robotics (ISR), Lisbon,
Portugal.  His current research interests are in the area of
optimization and statistical inference for distributed systems.
\end{IEEEbiography}

%%% Local Variables:
%%% mode: latex
%%% TeX-master: "ieeemanuscript"
%%% End:

\begin{IEEEbiography}[{\includegraphics[width=1in,height
=1.25in,clip,keepaspectratio]{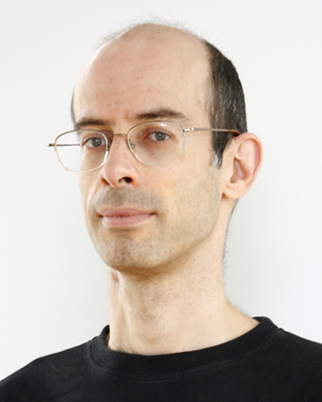}}]{Jo\~{a}o Gomes}
(S'95-M'03) received the Diploma, M.S. and
Ph.D. degrees in electrical and computer engineering from Instituto
Superior Técnico (IST), Lisbon, Portugal, in 1993, 1996 and 2002,
respectively. He joined the Department of Electrical and Computer
Engineering of IST in 1995, where he is presently an Assistant
Professor. Since 1994 he has also been a researcher in the Signal and
Image Processing Group of the Institute for Systems and Robotics, in
Lisbon. He currently serves as an Associate Editor for signal
processing and communications in the IEEE Journal of Oceanic
Engineering. His research interests include channel identification and
equalization in wireless communications, underwater communications and
acoustics, fast algorithms for adaptive filtering, and sensor
networks.
\end{IEEEbiography}

%%% Local Variables:
%%% mode: latex
%%% TeX-master: "ieeemanuscript"
%%% End:

\end{document}